\documentclass{amsart}
\usepackage{hyperref}
\usepackage{enumerate}
\usepackage{a4wide}
\usepackage{amsrefs}
\usepackage{graphicx}
\usepackage{pict2e}

\theoremstyle{plain}
\newtheorem{claim}{Claim}
\newtheorem{theorem}{Theorem}
\newtheorem{proposition}{Proposition}
\newtheorem{lemma}{Lemma}
\newtheorem{fact}{Fact}
\newtheorem{corollary}{Corollary}

\theoremstyle{definition}
\newtheorem{definition}{Definition}
\newtheorem{example}{Example}
\newtheorem{notation}{Notation}

\theoremstyle{remark}
\newtheorem{remark}{Remark}

\newcommand{\rot}{\operatorname{rot}}
\newcommand{\LCS}{\operatorname{LCS}}

\newcommand{\selftanM}{
\begin{picture}(20,12)
\put(4,0){\line(0,1){13}}
\put(8.5,0){\line(0,1){13}}
\put(14,4){$\leftrightarrow$}
\end{picture}
\begin{picture}(20,12)
\qbezier(9.5,10)(10.5,11.5)(10.5,13)
\qbezier(9.5,3)(7.5,7)(9.5,10)
\qbezier(9.5,3)(10.5,1.5)(10.5,0)
\qbezier(7.5,10)(6.5,11.5)(6.5,13)
\qbezier(7.5,3)(9.5,7)(7.5,10)
\qbezier(7.5,3)(6.5,1.5)(6.5,0)
\put(14,4){$\leftrightarrow$}
\end{picture}
\begin{picture}(20,12)
\qbezier(8.5,10)(10.5,11.5)(10.5,13)
\qbezier(8.5,3)(4,7)(8.5,10)
\qbezier(8.5,3)(10.5,1.5)(10.5,0)
\qbezier(8.5,10)(6.5,11.5)(6.5,13)
\qbezier(8.5,3)(13,7)(8.5,10)
\qbezier(8.5,3)(6.5,1.5)(6.5,0)
\end{picture}
}
\newcommand{\selftanMa}{
\begin{picture}(20,12)
\put(4,-2){\line(0,1){13}}
\put(8.5,-2){\line(0,1){13}}
\put(14,2){$\leftrightarrow$}
\end{picture}
\begin{picture}(20,12)
\qbezier(8.5,8)(10.5,9.5)(10.5,11)
\qbezier(8.5,1)(4,5)(8.5,8)
\qbezier(8.5,1)(10.5,-0.5)(10.5,-2)
\qbezier(8.5,8)(6.5,9.5)(6.5,11)
\qbezier(8.5,1)(13,5)(8.5,8)
\qbezier(8.5,1)(6.5,-0.5)(6.5,-2)
\end{picture}
}

\newcommand{\selftanMb}{
\begin{picture}(20,15)
\qbezier(8.5,10)(10.5,11.5)(10.5,13)
\qbezier(8.5,3)(4,7)(8.5,10)
\qbezier(8.5,3)(10.5,1.5)(10.5,0)
\qbezier(8.5,10)(6.5,11.5)(6.5,13)
\qbezier(8.5,3)(13,7)(8.5,10)
\qbezier(8.5,3)(6.5,1.5)(6.5,0)
\put(14,4){$\rightarrow$}
\end{picture}
\begin{picture}(20,15)
\put(4,0){\line(0,1){13}}
\put(8.5,0){\line(0,1){13}}
\end{picture}
}

\newcommand{\tripleM}{
\begin{picture}(30,0)
\qbezier(4.5,10)(6.5,11.5)(6.5,13)
\qbezier(4.5,3)(0,7)(4.5,10)
\qbezier(4.5,3)(6.5,1.5)(6.5,0)
\put(0,0){\line(1,1){13}}
\put(13,0){\line(-1,1){13}}
\put(17,4){$\leftrightarrow$}
\end{picture}
\begin{picture}(30,0)
\put(6.5,0){\line(0,1){13}}
\put(0,0){\line(1,1){13}}
\put(13,0){\line(-1,1){13}}
\put(17,4){$\leftrightarrow$}
\end{picture}
\begin{picture}(30,0)
\qbezier(8.5,10)(6.5,11.5)(6.5,13)
\qbezier(8.5,3)(13,7)(8.5,10)
\qbezier(8.5,3)(6.5,1.5)(6.5,0)
\put(0,0){\line(1,1){13}}
\put(13,0){\line(-1,1){13}}
\end{picture}
}

\begin{document}
\title[Commutators of pure twin groups and rondles]{Commutators of pure twin groups and rondles}
\author{Noboru Ito}
\address{Department of Mathematics, Faculty of Engineering, Shinshu University, Wakasato 4-17-1, Nagano, Nagano 380-8553, Japan}
\email{nito@shinshu-u.ac.jp}
\keywords{twin group; pure twin group; plane curve; rondle; finite-order invariant; triple-point modification; lower central series}
\date{May 24, 2026}
\maketitle

\begin{abstract}
We formulate a relationship between finite-order rondle invariants with respect to triple-point modifications and the lower central series of subgroups of a pure twin group.   
Using our formulation, we construct infinitely many infinite sequences of prime plane curves such that, for each sequence, the curves share the same invariants up to any fixed order.
\end{abstract}
\section{Introduction}\label{sec:intro}
A \emph{long curve} is a generic immersion of an oriented line into $\mathbb{R}^2$ that coincides with the $x$-axis outside a certain disc.  Two long curves are equivalent if they are obtained from each other by a finite sequence of two types of moves.  The first is a diffeomorphism of $\mathbb{R}^2$ with compact support, and the second is a self-tangency modification, i.e.,  \selftanMa. 
Every long curve is associated with a closed curve on $\mathbb{R}^2 \cup \{ \infty \}$ with the base point $\{ \infty \}$.   Since an equivalence class of one-component closed curves under the two moves described above is known as a one-component \emph{rondle}, we freely identify a long curve with a one-component rondle having a base point.  
Extending the definition of a one-component rondle without a base point to the multi-component case, also called a \emph{rondle}, is straightforward.  In this paper, rondles and long curves are considered in the plane unless stated.   
\begin{theorem}\label{thm:mainLCS}
Let $C$ and $C'$ be two rondles that differ by a twin $p \in \LCS_n (\overline{PTW_k})$, the $n$th group of the lower central series of $\overline{PTW_k}$, the small pure twin group on $k$ arcs.  Let $f$ be a rondle  invariant of order less than $n$ with respect to triple-point modifications.  Then $f (C)= f (C')$.      
\end{theorem}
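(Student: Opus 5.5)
This is the standard Goussarov--Habiro/Stanford-type argument: finite-order invariants vanish on deep terms of the lower central series. The idea is to reinterpret ``differ by a twin $p$'' through the group ring of $\overline{PTW_k}$ and its augmentation ideal.

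\textbf{Setup.} The rondle $C$ contains a tangle region on $k$ arcs in which a trivial twin is inserted; $C'$ is obtained by replacing that region by $p$. More generally, for any $g \in \overline{PTW_k}$ let $C_g$ denote the rondle with $g$ inserted, so $C=C_1$ and $C'=C_p$. We extend $f$ linearly to a map $F\colon \mathbb{Z}[\overline{PTW_k}] \to A$ by $F(\sum a_g g)=\sum a_g f(C_g)$. The goal is to show $F(p-1)=0$.

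\textbf{Key steps.} We carry these out in order.
\begin{enumerate}[(1)]
\item \emph{Generators correspond to triple-point modifications.} The small pure twin group is generated by elements $\tau$ that realize a single triple-point modification, up to self-tangency moves and planar isotopy. Here we use the definition of the small group: the quotient where self-tangency relations already hold. For such a $\tau$ and elements $a,b$, the difference $C_{a\tau b}-C_{ab}$ is, by definition, a singular rondle with one ``triple-point resolution'' marker.
\item \emph{Products of augmentation elements give singular rondles.} More generally, for $g_1,\dots,g_m$ among these generators and arbitrary interleaving elements $a_i$, the element $a_0(g_1-1)a_1(g_2-1)\cdots(g_m-1)a_m$ maps under $F$ to $f$ evaluated on a rondle with $m$ triple-point singularities. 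This is the alternating sum over resolutions. The order condition says $f$ vanishes on rondles with $n$ such singularities, so $F$ vanishes on $I^n$, where $I$ is the augmentation ideal. Here one uses that $I$ is generated as a two-sided ideal by the elements $\tau^{\pm1}-1$, and that $\tau^{-1}-1=-\tau^{-1}(\tau-1)$. So $I^n$ is spanned by elements of the displayed form.
\item \emph{Lower central series sits in the dimension subgroup.} We show the classical inclusion $\LCS_n(G)-1\subset I^n$ by induction on $n$. If $x-1\in I^{a}$ and $y-1\in I^{b}$, then
\[
[x,y]-1=x^{-1}y^{-1}\bigl((x-1)(y-1)-(y-1)(x-1)\bigr)\in I^{a+b}.
\]
Products are handled by $xy-1=(x-1)(y-1)+(x-1)+(y-1)$, and inverses by $x^{-1}-1=-x^{-1}(x-1)$. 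Hence $p-1\in I^n$, so $F(p-1)=f(C')-f(C)=0$.
\end{enumerate}

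\textbf{Main obstacle.} The group-theoretic step (3) is routine. The real work is step (2): making precise that a word in the generators of $\overline{PTW_k}$, inserted into the rondle, changes it by genuine triple-point modifications. This needs a careful check in two places.
\begin{itemize}
\item The generators of the small pure twin group (e.g.\ the images of $(s_is_{i+1})^3$-type elements) must be expressed as triple-point modifications composed with self-tangency moves.
\item The singular-rondle alternating sum must be independent of the choices made for the interleaving elements $a_i$.
\end{itemize}
I would first verify that each standard generator is equivalent, modulo self-tangency moves, to one triple-point modification. I would then define the order of $f$ via $n$ simultaneous local triple-point crossings at disjoint sites, which the factors $g_i-1$ realize at distinct height levels of the twin diagram.
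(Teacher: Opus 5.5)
Your argument is correct and is essentially the paper's proof. The paper shows that $f_{r,T}$ vanishes on $J^n$, where $J\subset\mathbb{Q}TW_k$ is generated by the Yang--Baxter differences, and that $p-1\in J^n$ for $p\in\LCS_n(\overline{PTW_k})$ via the same commutator identity; your augmentation ideal $I$ of $\mathbb{Z}[\overline{PTW_k}]$ sits inside $J$, because each generator satisfies $g-1=u(\tau-\tau')v$ with $u\tau'v=1$, so the difference is only packaging. The two ``obstacles'' you flag are immediate: the first is exactly the definition of the small pure twin group (a subgroup of $PTW_k$, not a quotient), and no independence of choices is needed, since $F$ is already well defined on group elements and you only need each individual alternating sum to vanish.
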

\noindent Precise definitions of the terms are given in Section~\ref{sec:prelim}.  The \emph{twin group} $TW_k$ on $k$ arcs is generated by a set of $(k-1)$ generators   $\{\sigma_i \mid i=1, 2, \ldots, k-1\}$ satisfying the following relations: 
\begin{align}
\sigma^2_i &=1 \quad (\forall i), \label{eq:in} \\
\sigma_i \sigma_j &= \sigma_j \sigma_i  \quad (|i-j|>1).  \label{eq:slide}
\end{align}
The \emph{pure twin group} $PTW_k$ on $k$ arcs is the kernel of a natural homomorphism $TW_k$ $\to$ $S_k$.  The subgroup $\overline{PTW_k}$ of $PTW_k$ is called a \emph{small pure twin group} on $k$ arcs if it    
is generated by elements  
$p^{(1)}, p^{(2)}, \dots,  p^{(\ell)}$, each of which is reduced to $1$ by applying    
\begin{equation}\label{eq:YB}
\sigma_i \sigma_{i+1} \sigma_i = \sigma_{i+1} \sigma_i \sigma_{i+1} \qquad (1 \le i \le k-2) \qquad {\textrm{(Yang-Baxter relation)}}, 
\end{equation}
exactly once, with relations (\ref{eq:in}) and (\ref{eq:slide}) freely used.   We will prove  Theorem~\ref{thm:mainLCS} in Section~\ref{sec:proof:mainLCS}.   

In Section~\ref{sec:prelim}, we define the order with respect to triple-point modification for a rondle invariant, and also define what it means for two rondles  to \emph{differ by} a twin $p$.  As a typical example, if a rondle $\bar{x}$ represents the closure of a twin $x$, and $p$ and $b$ are any two twins with the same number of arcs, then $\bar{b}$ and $\overline{pb}$ \emph{differ by} $p$.  
Theorem~\ref{thm:mainInf} provides a method for modifying rondles without changing their invariants up to some order.
Such modifications are related to algebraic structures arising from the topology of the space of immersions and the strata corresponding to triple intersections.
\begin{theorem}\label{thm:mainInf}
For any positive integers $k$ and $n$ with $k \ge 4$, there exist infinitely many pairs $C_i, C'_i$ of prime rondles such that $f(C_i)=f(C'_i)$ $(i=1, 2, 3, \dots)$ for any rondle invariant $f$ of order less than $n$.  
Further, for each $i$, there exists a sequence $C'_i=C^{(1)}_i, C^{(2)}_i, C^{(3)}_i \dots$ such that $f(C_i)=f(C^{(j)}_i)$ $(j=1, 2, 3, \dots)$.   
\end{theorem}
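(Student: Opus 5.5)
We derive Theorem~\ref{thm:mainInf} from Theorem~\ref{thm:mainLCS}. The plan is to take $C_i=\bar b_i$ and $C'_i=\overline{p b_i}$, where $b_i\in TW_k$ and $p$ is a nontrivial element of $\LCS_n(\overline{PTW_k})$. As noted after Theorem~\ref{thm:mainLCS}, the closures $\bar b$ and $\overline{pb}$ differ by $p$. Hence every invariant $f$ of order less than $n$ satisfies $f(C_i)=f(C'_i)$. For the sequence we use the powers $p^j$, which lie in $\LCS_n$ because it is a subgroup. Setting $C^{(j)}_i=\overline{p^j b_i}$, the rondles $\bar b_i$ and $C^{(j)}_i$ differ by $p^j$, so $f(C_i)=f(C^{(j)}_i)$ for every $j$. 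What remains is to produce suitable $p$ and $b_i$ and to prove primeness and nontriviality.

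\emph{Step 1: choosing $p$.} For $k\ge 4$ take two Yang--Baxter elements supported on disjoint or overlapping triples of strands, for example
\[
p^{(1)}=(\sigma_1\sigma_2)^3,\qquad p^{(2)}=(\sigma_2\sigma_3)^3 .
\]
Each of these reduces to $1$ by one application of (\ref{eq:YB}). Form the iterated commutator
\[
p=[\dots[[p^{(1)},p^{(2)}],p^{(2)}],\dots,p^{(2)}]
\]
with $n-1$ brackets, so that $p\in\LCS_n(\overline{PTW_k})$. We need $p\neq1$ in $TW_k$. Since $TW_k$ is a right-angled Coxeter group, its pure subgroup is free in low rank (for $k=4$, $PTW_4$ is free of rank $7$). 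Alternatively, one can check directly that the reduced word in the Coxeter normal form (with $\sigma_1,\sigma_3$ commuting) is nonempty. We expect $p^{(1)},p^{(2)}$ to generate a free subgroup of rank $2$, and in a free group iterated commutators of free generators are nontrivial. Infinite order of $p$ follows automatically, since Coxeter groups of this type are torsion-free on their pure parts.

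\emph{Step 2: primeness and distinctness.} We choose $b_i$, for instance $b_i=(\sigma_1\sigma_2\cdots\sigma_{k-1})^{m_i}$ with suitable $m_i$, so that each closure is a one-component (or controlled), reduced, prime curve, and so that the $C_i$ are pairwise inequivalent. Distinctness can be detected by a rondle invariant such as the number of double points in a minimal diagram; for twins this is the length of the reduced word, which is invariant under self-tangency moves. Primeness should follow from a connectivity argument: the twin diagram of a cyclically reduced word in which every adjacent pair of strands crosses admits no separating circle meeting the curve in two points.

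\emph{Main obstacle.} The hardest part is to show that the $C^{(j)}_i$ are genuinely different rondles and are prime, rather than merely that the words differ. We would first try to prove that closures of cyclically reduced twins are minimal, i.e.\ that a conjugacy-class version of the Coxeter normal form classifies rondles given as closures. The crossing count of $\overline{p^jb_i}$ then grows with $j$, giving infinitely many distinct curves.
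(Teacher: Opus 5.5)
\textbf{There is a genuine gap: the distinctness and primeness part of the theorem is not proved.}

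The invariance half of your argument is sound. Using powers $p^j\in\LCS_n(\overline{PTW_k})$ is a clean way to get the whole sequence for a fixed $n$. Nontriviality of $p$ can also be justified along your lines, provided you drop the ``disjoint triples'' option, since disjoint triples give commuting generators and trivial commutators. The words $(\sigma_1\sigma_2)^3(\sigma_2\sigma_3)^3$ and $(\sigma_2\sigma_3)^3(\sigma_1\sigma_2)^3$ have reduced lengths $10$ and $12$, so the two generators do not commute. They therefore freely generate a rank-$2$ subgroup of the free group $PTW_4$, in which left-normed commutators are nontrivial.

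None of this touches the actual content of the theorem. The statement is vacuous unless the rondles are genuinely different (otherwise take $C'_i=C_i$). Moreover, $p^j\neq 1$ in $TW_k$ says nothing about whether $\overline{p^jb_i}$ and $\overline{b_i}$ are different rondles. Your Step~2 must supply exactly this: the $C_i$ pairwise distinct, the $C^{(j)}_i$ different from $C_i$ and from one another, and all of them prime. As written it is only a plan, and the tool it proposes is wrong as stated.

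The reduced word length in $TW_k$ is an invariant of twins, not of their closures. Closing up allows conjugation and, in the plane, bigons whose boundary runs along the closing arcs. In particular, the disk enclosed by the innermost closing loop is a face with as many edges as there are occurrences of the generator involving the innermost position. So ``closures of cyclically reduced twins are minimal'' is false. For example, the closure of the cyclically reduced word $(\sigma_1\sigma_2)^2\in TW_3$ has a bigon there and loses two double points by a $(-2)$ move.

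The missing ingredient is Khovanov's uniqueness theorem (Fact~\ref{fact:reduced}). Every rondle has a unique bigon-free representative, reached by $(-2)$ moves alone. Hence the number of double points of any bigon-free diagram is a rondle invariant (Corollary~\ref{cor:reduced}). With this in hand, for concrete $b_i$ and $p$ you must verify the following.
\begin{enumerate}[(a)]
\item The closure diagrams of $b_i$ and of $p^jb_i$ contain no bigon face at all. Inside the annulus this means no two cyclically consecutive occurrences of some $\sigma_m$ are separated only by letters commuting with $\sigma_m$. You must check this across the junctions $p\cdot p$ and $p^j\cdot b_i$ as well, and separately check the central face.
\item The resulting crossing numbers separate different $i$ and grow with $j$.
\end{enumerate}
This bigon bookkeeping is what the paper carries out for its family in Figure~\ref{fig:example} and the commutators $\lambda_n=[a,\lambda_{n-1}]$.

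Primeness likewise needs an argument that exhibits a prime representative. The condition that every adjacent pair of strands crosses does not by itself exclude a circle meeting the curve in two points and cutting off a nontrivial part.
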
 
\noindent We will prove Theorem~\ref{thm:mainInf} in Section~\ref{sec:construction}, and we remark that Fact~\ref{factRondle} ensures that Theorem~\ref{thm:mainInf} applies to all rondles on $S^2$:  
\begin{fact}[\cite{Khovanov1996}]\label{factRondle}
Every rondle on $S^2$ is the closure of a twin.  
\end{fact}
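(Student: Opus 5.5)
The plan is to adapt Vogel's proof of Alexander's theorem to plane curves. Vogel's algorithm is purely diagrammatic: it uses only second Reidemeister moves and never looks at over/under information. In the world of rondles, second Reidemeister moves are exactly self-tangency modifications (and their inverses). So the algorithm should transfer verbatim and produce a curve in "braided" position. That curve can then be read off as the closure of a twin.

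Let $C$ be a rondle on $S^2$, represented by a generic immersion of oriented circles, with orientations chosen arbitrarily. Orientations are only auxiliary: the self-tangency move in the definition of a rondle is allowed for either relative orientation of the two strands. The steps are as follows.

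First, smooth every double point of $C$ according to the orientations, obtaining a family of disjoint oriented simple closed curves on $S^2$, the Seifert circles of $C$. Each double point is recorded as a small "crossing band" joining two Seifert circles.

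Second, call two Seifert circles \emph{incoherent} if they are adjacent to a common complementary region of the diagram but are oriented so that, after removing a point of that region, they are not nested coaxially. Following Vogel, one chooses such a region with two incoherent circles bounding it. One then performs a single self-tangency modification that pushes an edge of one circle over an edge of the other inside that region. This creates two new double points and merges the two incoherent Seifert circles into a configuration with one fewer incoherent pair. Vogel's complexity, the number of Seifert circles minus the number of their maximal coherent nests (equivalently, the height defect of the Seifert graph), strictly decreases. The move introduces no triple points and leaves the rondle unchanged.

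Third, when no incoherent pair remains, all Seifert circles on $S^2$ are coherently oriented and pairwise nested around a common pair of points $O,\infty$ lying in complementary regions. By a compactly supported diffeomorphism of $S^2\setminus\{O,\infty\}\cong S^1\times\mathbb{R}$ we make the Seifert circles concentric circles $S^1\times\{i\}$, $i=1,\dots,k$. We also make the crossing bands short radial segments between consecutive levels $i$ and $i+1$, located at distinct angles. Since every crossing band joins adjacent levels (a band joining non-adjacent circles would have to cross an intermediate circle, which is impossible for disjoint circles), reading the crossings in increasing angle gives a word in $\sigma_1,\dots,\sigma_{k-1}$. This word is a twin $x\in TW_k$, and $C$ is its closure $\bar{x}$. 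Genericity of the immersion guarantees that distinct crossings can be placed at distinct angles, so the word is well defined up to relation (\ref{eq:slide}).

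The main obstacle is the second step: checking that Vogel's termination argument genuinely does not rely on crossing information, and that the needed move is always a self-tangency rather than something requiring a triple-point move. I would first verify this with Vogel's original count via the Seifert graph, which is defined from the smoothed curves alone. I would then check locally that the move he uses is the "$\leftrightarrow$" of two parallel strands shown in the introduction.
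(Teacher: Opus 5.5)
The paper gives no proof of this fact; it is simply quoted from Khovanov. Your route is a sound way to prove it. You run Vogel's algorithm on $S^2$, observing that each of his moves is a second Reidemeister move performed inside a single face. It is therefore a self-tangency modification that creates no triple points. The move involves two \emph{oppositely} oriented strands, which is why it matters that the self-tangency move for rondles is unoriented. You are also right to work on $S^2$. On $\mathbb{R}^2$ the statement fails: a figure-eight curve has $\rot=0$, while every component of a twin closure, being transverse to the radial rays, has nonzero rotation number.

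The step that fails as written is termination. First, your definition of incoherence is vacuous. After deleting a point of a face adjacent to both circles, the two circles always bound disjoint disks, so they are never nested. The intended local notion is that both circles induce the same orientation on the boundary of the common face.

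Second, with that local notion a Vogel move does \emph{not} leave one fewer incoherent pair. Take two clockwise copies of the closure of $\sigma_1\in TW_2$ side by side. The only defect face is the outer one. After the move, the region beyond the merged circle contains the two old inner circles and the new bigon circle, all inducing the same boundary orientation, and two defect faces appear.

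The quantity that works is global:
\begin{itemize}
\item The regions of $S^2$ minus the Seifert circles form a tree $\Gamma$ whose edges are the circles. Direct each edge toward the region on which that circle induces the boundary orientation.
\item Call two circles coherent if they point the same way along the path of $\Gamma$ joining them, i.e.\ they are parallel in the annulus of $S^2$ they cobound.
\item A move between circles $c_1,c_2$ in a region $R$ folds the edges $c_1,c_2$ into their band sum $c'$, merges the two regions beyond them, and adds a leaf edge $c''$ (the bigon circle).
\item One checks that $(c_1,c_2)$ is incoherent and $(c',c'')$ is coherent. For every other circle $e$, the pairs $\{(e,c_1),(e,c_2)\}$ and $\{(e,c'),(e,c'')\}$ contain equally many incoherent ones, and pairs not involving these circles are unaffected.
\item Hence the number of incoherent pairs drops by exactly one per move.
\end{itemize}

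You also need a move to exist whenever this number is positive. In that case some region $R$ has two boundary circles of the same type relative to $R$, meaning both do, or both do not, induce its boundary orientation. The local picture of the oriented smoothing shows that every crossing band joins two circles of opposite types. Suppose no face of the diagram in $R$ met two circles of one type. The faces in $R$ are connected to one another across bands, so they would all meet the same two circles, forcing $R$ to have only two boundary circles, a contradiction.

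When the count is zero, $\Gamma$ is a consistently directed path. That is exactly your third step. The same band observation also rules out a band joining a circle to itself.
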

A closed curve $C$ in $\mathbb{R}^2$, with or without a base point, is said to be  \emph{composite} if there exists a circle $S$ embedded in $\mathbb{R}^2$ which intersects $C$ in exactly two points, and such that $S$ does not bound a $2$-disk $d$ with $d \cap C$ isotopic inside $d$ to an arc with no double points.  A closed curve $C$ is \emph{prime} if it is not composite.    A \emph{prime rondle} is a rondle that has a  representative which is a prime plane curve.    

A rondle invariant of order zero with respect to triple-point modifications is obtained from  
the rotation number, defined as the integral of the curvature with respect to the arc-length parameter, which was introduced by Hopf \cite{Hopf1935} and later developed by Whitney \cite{Whitney1937}.  Order one invariants are called \emph{Arnold invariants} $J^+$, $J^-$, and $St$ \cite{Arnold1994Book}.    
Higher order $J^+$ invariants have been well studied in the context of Legendrian knots (e.g.,  \cite{Goryunov1998}, \cite{Tchernov2002}), whereas higher-order $J^-$ invariants have been characterized by the geometry and topology of complexification, triggered by Rokhlin formula \cite{Viro1996}.  On the other hand, $St$-theory has developed in a relatively independent manner.   A direction of higher-order $St$ invariants is to give concrete partition functions \cite{Shumakovich1995, Tabachnikov1996}  and the other is an analogue of the Vassiliev skein relation \cite{ArakawaOzawa1999}.   For a plane curve $C$ and a parameter $q$,  quantized polynomial invariants $P_C (q)$, $I_q(C)$, and $St_q (C)$ were introduced in  \cite{Viro1996}, \cite{LanzatPolyak2013},  and \cite{Ito2023}, respectively, corresponding to $J^-$, $J^+$, and $St$.   For $P_C (q)$ of a $k$-component plane curve $C$, by substituting $q=e^x$ into them, the first  coefficient is $\rot(C)$ and the second one is $k-J^-$.  For $I_q (C)$ ($St_q$,~resp.), by the Taylor expansion at $q=1$, the zeroth coefficient is $\rot(C)$ and the first one $I'_1 (C)$ ($St'_1$,~resp.) is $\frac{1-J^+}{2}$ ($St$,~resp.).  
The coefficients of this expansion provide a family of invariants related to the higher-order theory of Arnold's strangeness.
In particular, certain coefficients $St^r(C)$ are known as Tabachnikov invariants.

Before closing this introduction, we briefly outline  the structure of the paper.  Section~\ref{sec:prelim} reviews the definitions and notations used throughout.  
Section~\ref{sec:proof:mainLCS} contains the proof of the first main result (Theorem~\ref{thm:mainLCS}), and     Section~\ref{sec:DiagrammaticProof} presents a diagrammatic alternative proof of the same result (Theorem~\ref{thm:mainLCS}).      
Section~\ref{sec:construction} contains the proof of the second main result (Theorem~\ref{thm:mainInf}).  
Section~\ref{sec:appendix} provides a proof of a key claim used in the argument for the  second main result (Theorem~\ref{thm:mainInf}), following the approach of \cite{Khovanov1997}.  Readers familiar with \cite{Khovanov1996, Stanford1996} may find Sections~\ref{sec:prelim} and \ref{sec:appendix} somewhat more accessible; 
\cite{Stanford1996} is relevant to Section~\ref{sec:proof:mainLCS}, and \cite{Ohyama1995} relates to Section~\ref{sec:DiagrammaticProof}.   
Except for an elementary inductive lemma (Lemma~\ref{lem:Kformula}), which is similar to one in \cite{Ohyama1995}, the exposition is essentially self-contained and may be read independently of these references.     

\section{Definitions and notations}\label{sec:prelim}
\begin{definition}[twin group]\label{def:twin}
The \emph{twin group} $TW_k$ on $k$ arcs is generated by a set of $(k-1)$ generators   $\{\sigma_i \mid i=1, 2, \ldots, k-1\}$ that satisfy relations (\ref{eq:in}) and (\ref{eq:slide}) in  Section~\ref{sec:intro}.  
\end{definition}
\begin{definition}[pure twin group]\label{def:puretwin}
The \emph{pure twin group} $PTW_k$ on $k$ arcs is the kernel of a natural homomorphism $TW_k$ to $S_k$, which is the symmetric group of degree $k$.    
\end{definition}
\begin{definition}[small pure twin group]\label{def:smallpuretwin}
A subgroup $\overline{PTW_k}$ of $PTW_k$ is called a \emph{small pure twin group} on $k$ arcs if it is generated by elements
$p^{(1)}, p^{(2)}, \dots, p^{(\ell)} \in PTW_k$,
each of which is reduced to $1$ by applying (\ref{eq:YB}) exactly once, with relations (\ref{eq:in}) and (\ref{eq:slide}) freely used.  
\end{definition}
\begin{definition}[configuration]\label{def:configuration}
Let $\mathbb{R}^2 = \{(x, y) \mid x, y \in \mathbb{R} \}$.  We take $n$ points $(1, 0)$, $(2, 0), \dots$, $(n,0)$ on the line $y=0$ and also $n$ points $(1, 1)$, $(2, 1), \dots, (n,1)$ on the line $y=1$.  We consider configurations of $n$ arcs in $\mathbb{R} \times [0, 1]$ that 
connect the points on $y=0$ and those on $y=1$ by $n$ arcs.    
Then we require the conditions (\ref{CondiMono}) and (\ref{CondiTriple}): 
\begin{enumerate}[(i)]
\item \label{CondiMono}
the projection of any arc to the $y$-coordinate is a homeomorphism onto $[0, 1]$; that is, every arc is monotonic.   
\item \label{CondiTriple}
there are no three arcs sharing a common point.  
\end{enumerate}
We say that two configurations satisfying (\ref{CondiMono}) and (\ref{CondiTriple}) are \emph{equivalent} if these are connected by a homotopy of arcs in $\mathbb{R} \times [0, 1]$ such that conditions (\ref{CondiMono}) and (\ref{CondiTriple}) hold and the ends of arcs are fixed throughout the homotopy. 
The product of two configurations on the same number of arcs is defined by putting one on top of the other.    
\end{definition}
The following proposition is known (e.g., \cite{Khovanov1997}).  
\begin{proposition}\label{prop:GeomRealization}
If configurations with $n$ arcs satisfy (\ref{CondiMono}) and (\ref{CondiTriple}), their set forms a twin group $TW_n$.   
\end{proposition}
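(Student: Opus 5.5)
The plan is to build an explicit isomorphism between the set $\mathcal{C}_n$ of equivalence classes of configurations and the group $TW_n=\langle \sigma_1,\dots,\sigma_{n-1}\mid (\ref{eq:in}),(\ref{eq:slide})\rangle$.

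\textbf{Step 1: $\mathcal{C}_n$ is a group with a word map.} Stacking of configurations is well defined on equivalence classes and associative up to reparametrizing the $y$-coordinate. The trivial configuration of $n$ vertical segments is the identity, and reflecting a configuration in the line $y=1/2$ gives an inverse, as Step~2 will show. By a small perturbation through admissible configurations we may assume the following: all double points have distinct $y$-coordinates, each crossing is transverse, and arcs are vertical near $y=0,1$. Reading the double points from bottom to top, a crossing between the arcs in positions $i$ and $i+1$ (counted from the left at that height) contributes the letter $\sigma_i$. This gives a word $w$, and we let $\Psi$ send a generic configuration to the class of $w$ in $TW_n$.

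\textbf{Step 2: surjectivity, and well-definedness of the inverse.} Define $\Phi:TW_n\to\mathcal{C}_n$ by sending $\sigma_i$ to the standard configuration in which arcs $i,i+1$ cross once and all other arcs are vertical, and extending by stacking. We check that the relations hold in $\mathcal{C}_n$.
\begin{enumerate}[(a)]
\item For $\sigma_i^2=1$: the two crossings of $\sigma_i^2$ form a bigon, which can be removed by a self-tangency homotopy. This creates only tangencies of two arcs and never a triple point, so condition (\ref{CondiTriple}) holds throughout.
\item For $\sigma_i\sigma_j=\sigma_j\sigma_i$ with $|i-j|>1$: the two crossings involve disjoint pairs of arcs, so their heights can be interchanged without any three arcs meeting.
\end{enumerate}
Hence $\Phi$ is a homomorphism. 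Since every generic configuration is isotopic (rel.\ condition (\ref{CondiMono})) to the stacked product of its crossings, $\Phi$ is surjective. Composing the two constructions gives $\Psi\circ\Phi=\mathrm{id}$ on words.

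\textbf{Step 3: $\Psi$ is invariant under equivalence (main obstacle).} Take a homotopy between two generic configurations satisfying (\ref{CondiMono}) and (\ref{CondiTriple}) throughout, and put it in general position as a one-parameter family. Monotonicity is an open condition, so it survives the perturbation. The word changes only at the following codimension-one events:
\begin{enumerate}[(a)]
\item a tangency of two arcs, where a pair $\sigma_i\sigma_i$ is created or cancelled;
\item two crossings passing through the same height. Two crossings at the same height cannot share an arc, since a monotonic arc meets each horizontal line exactly once, and two crossings on one arc at one height would need three arcs through a point. So they involve disjoint pairs, and the event is an application of (\ref{eq:slide});
\item a triple point, which is the Yang--Baxter move (\ref{eq:YB}); this is excluded by (\ref{CondiTriple}).
\end{enumerate}
Crossings at the endpoints cannot occur because the ends are fixed and distinct. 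Thus the class of $w$ in $TW_n$ is unchanged along the homotopy, so $\Psi$ is well defined on $\mathcal{C}_n$.

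\textbf{Conclusion.} By Step~3, $\Psi$ is well defined on classes; by Step~2, $\Psi\circ\Phi=\mathrm{id}$ and $\Phi$ is surjective. Therefore $\Phi$ is a bijective homomorphism, and $\mathcal{C}_n\cong TW_n$.

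The delicate point is the transversality argument in Step~3. One must justify that a generic path in the space of monotone, triple-point-free configurations meets only the strata (a) and (b), and crosses them transversally. The cleanest route is to work with piecewise-linear arcs, where these codimension counts are elementary.
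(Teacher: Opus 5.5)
Your proposal is correct and follows the paper's own route: the paper only sketches this proposition by identifying $\sigma_i$ with the configuration having a single double point between the $i$th and $(i+1)$th arcs, and it defers the details to Khovanov. Your maps $\Phi$ and $\Psi$, together with your general-position analysis of homotopies (tangencies realizing (\ref{eq:in}), height exchanges realizing (\ref{eq:slide}), and triple points excluded by (\ref{CondiTriple})), supply exactly those omitted details.
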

By Proposition~\ref{prop:GeomRealization}, relations of twins explicitly correspond to local transformations of rondles as follows.        
\begin{definition}[self-tangency modification]
A \emph{self-tangency modification} is a local transformation $\selftanM$ that corresponds to the relation (\ref{eq:in}).  
\end{definition}
\begin{definition}[triple-point modification]
A \emph{triple-point modification} is a local transformation  
\begin{picture}(75,0)
\put(0,-5){$\tripleM$} 
\end{picture}
which corresponds to relation (\ref{eq:YB}).  
\end{definition}
\begin{definition}[rondle invariant, long curve invariant]
A map from the set of rondles (long curves,~resp.) to a set is called a rondle (long curve,~resp.) invariant.  In particular, any rondle or long curve invariant is unchanged by self-tangency modifications, where any self-tangency modification occurs away from the base point ($\infty$,~resp.) for rondles with base points (long curves,~resp.).  
\end{definition}
\begin{definition}[disjoint triangles]\label{def:DisjointTriangle}
A set $s$ of triangles is said to be \emph{disjoint} if any two triangles in $s$ share no vertex, edge, or face.   
\end{definition}
\begin{definition}[finite orders of long curve/rondle invariants]
\label{def:FiniteOrder}
A long curve (rondle,~resp.)  invariant $r$ taking values in an abelian group is said to have \emph{order} $n-1$, with respect to triple-point modifications, if there exists a positive  integer $n$ such that for any long curve (rondle,~resp.) with $n$ chosen disjoint triangles numbered $1$ to $n$
\begin{equation}\label{eq:VanishingSum}
\sum_{s \subset \{1, 2, \ldots n \}} (-1)^{|s|} r (D_{s}) = 0
\end{equation}
where $|s|$ is the cardinality of a subset $s$ of $\{1, 2, \dots n \}$, and $D_{s}$ is obtained from $D=D_{\emptyset}$ by applying the triple-point modification to each triangle in $s$.   The smallest such $n$ is called the order of $r$, and if there is no such $n$, then we say that $r$ has \emph{infinite order}.  
\end{definition}
By Definition~\ref{def:smallpuretwin} and Proposition~\ref{prop:GeomRealization}, we have: 
\begin{proposition}\label{prop:smallpuretwin}
Any configuration corresponding to $p \in \overline{PTW_k}$ is deformed into $1$ by a series of triple-point modifications.  
\end{proposition}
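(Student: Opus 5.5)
The plan is to unwind Definition~\ref{def:smallpuretwin} and translate each algebraic step into a geometric move through Proposition~\ref{prop:GeomRealization}. Since $\overline{PTW_k}$ is generated by $p^{(1)}, \dots, p^{(\ell)}$, any $p \in \overline{PTW_k}$ is a word
\[
p = (p^{(j_1)})^{\epsilon_1} (p^{(j_2)})^{\epsilon_2} \cdots (p^{(j_m)})^{\epsilon_m}, \qquad \epsilon_t = \pm 1.
\]
Every element of $TW_k$ is an involution-generated word, so the inverse of a word $\sigma_{i_1}\cdots\sigma_{i_r}$ is the reversed word $\sigma_{i_r}\cdots\sigma_{i_1}$. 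Reversing a word commutes with applying relations (\ref{eq:in}), (\ref{eq:slide}) and (\ref{eq:YB}), since each of these relations is palindromic or symmetric under reversal. Therefore $(p^{(j)})^{-1}$ is also reduced to $1$ by exactly one application of (\ref{eq:YB}) together with free use of (\ref{eq:in}) and (\ref{eq:slide}).

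Next we realize the word for $p$ as a product of configurations, stacked one on top of another as in Definition~\ref{def:configuration}, and treat each factor separately. By Proposition~\ref{prop:GeomRealization}, every use of (\ref{eq:in}) corresponds to a self-tangency modification or its inverse, every use of (\ref{eq:slide}) to a planar isotopy preserving conditions (\ref{CondiMono}) and (\ref{CondiTriple}), and the single use of (\ref{eq:YB}) to one triple-point modification. So the configuration of each factor is carried to the trivial configuration $1$ by one triple-point modification interspersed with moves that do not change the equivalence class. Doing this factor by factor, working from the top of the stack downward, turns the whole configuration of $p$ into $1$ by a series of triple-point modifications, with self-tangency modifications and isotopies in between, which are the moves allowed by the equivalence.

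The main obstacle is the claim that an algebraic application of a relation is realized by a local move on an arbitrary representing configuration, rather than only on a specific one. I would first check that the representative can be chosen in standard form, with crossings at distinct heights, so that each letter $\sigma_i$ is a single crossing in a horizontal strip. Then each relation acts inside a disc that meets only the relevant strips and arcs $i, i+1, i+2$. The local pictures given for the self-tangency and triple-point modifications are exactly these disc replacements, and equivalence of configurations lets us pass between any two representatives.
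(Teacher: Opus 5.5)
Your proof is correct and takes the same route as the paper, which obtains the proposition directly from Definition~\ref{def:smallpuretwin} and Proposition~\ref{prop:GeomRealization}: uses of (\ref{eq:in}) and (\ref{eq:slide}) become equivalence moves of configurations, and the single use of (\ref{eq:YB}) becomes one triple-point modification. Your extra steps fill in details the paper leaves implicit: the word-reversal argument showing that inverses of the generators also reduce with one application of (\ref{eq:YB}), the factor-by-factor treatment of products, and the reduction to standard-form representatives (the paper only verifies inverses by hand for $a^{\pm 1}, b^{\pm 1}$ in Section~\ref{sec:construction}).
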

An outline of proof of  Proposition~\ref{prop:GeomRealization} is given by an identification between $\sigma_i$ and a configuration that has  exactly one double point consisting of the $i$th and $(i+1)$th arcs.  Details are left to the reader; see \cite[Section~1]{Khovanov1997}.   
In this paper, we freely use this identification between twins of $TW_k$ and configurations with $k$ arcs that satisfy (\ref{CondiMono}) and (\ref{CondiTriple}).   
\begin{definition}[tangle map (cf.~\cite{Stanford1996})]
The image of a generic immersion $S^1 \sqcup S^1 \sqcup \cdots \sqcup S^1 \to \mathbb{R}^2$ such that all intersections are transverse double points is a \emph{link projection} and each image of a single $S^1$ is a \emph{strand}.   
A \emph{tangle projection} is a link projection with one boundary component homeomorphic to $S^1$, in which some strands may intersect transversely.  
By a \emph{tangle map} $T : TW_k \to \{\text{rondles}\}$ ($\{\text{long curves}\}$,~resp.) we mean a fixed way of putting a twin $p \in TW_k$ into a tangle projection to obtain a rondle (long curve,~resp.)  $T(p)$ as an equivalence class of link projections (single-component link projections with a fixed base point,~resp.).  
This map is well-defined since the defining relations of the twin group correspond to self-tangency modifications of link projections up to compactly supported diffeomorphisms of $\mathbb{R}^2$.
In particular, $T(p)$ is called the {\emph{twin closure}} or the {\emph{closure}} $\bar{p}$ if the $i$th strand on the bottom is looped around and identified with the $i$th strand on the top.  
For example, see Figure~\ref{fig:Tmap}.    
\end{definition}
\begin{figure}[htbp] 
   \centering
\includegraphics[width=10cm]{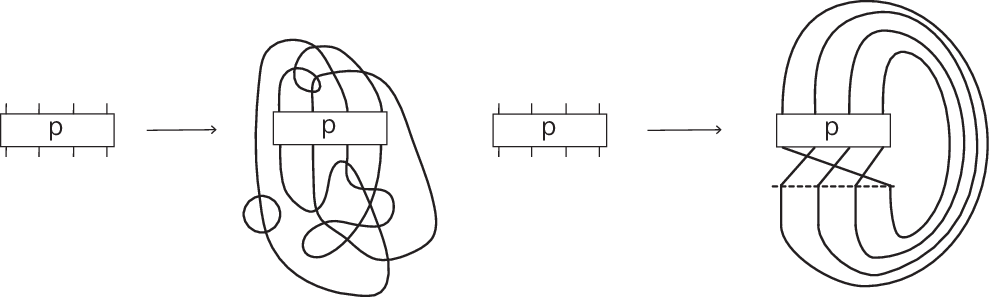} 
   \caption{Two examples of a tangle map $T(p)$ of a pure twin $p$ ($\in PTW_4$), yielding a multi-component rondle (left) or a one-component rondle (right).  In particular, the closure of $p \sigma_1 \sigma_2 \sigma_3$ is always a one-component curve (right).}
   \label{fig:Tmap}
\end{figure}
\begin{definition}[differ by a twin]
Let $x$ be a twin.    Two plane curves $C$ and $C'$ are said to \emph{differ by} a twin $p$ if $C= T(x)$ and $C' = T(px)$, where  $T(x)$ stands for the image of a tangle map $T$ of $x$.   
\end{definition}
By the definition of ``differ by'', there exists  a twin $x$ and a tangle map $T$ such that $T(x)=C$ and $T(px)=C'$.  
Thus, without loss of generality, we also use the following notation.   
\begin{notation}[$T_x$]
For a fixed twin $x$, define
$
T_x(q)=T(qx).
$
Then $T_x$ is also a tangle map, and
$
T_x(1)=C$, $T_x(p)=C'.
$
\end{notation}
\begin{definition}[ideals $J$, $J^n$]
Let $J \subset \mathbb{Q} TW_k$ denote the ideal generated by 
\[\{ 
(\sigma_i \sigma_{i+1} \sigma_i - \sigma_{i+1} \sigma_i \sigma_{i+1}) : 1 \le i \le k-2 \}.\] 
For $n \in \mathbb{Z}_{>0}$, let $J^n$ be the two-sided ideal generated by any product $x_1 x_2 \cdots x_n$ for each $x_i \in J$.
\end{definition}
\begin{definition}[$\LCS (G)$ of a group $G$]
\label{def:LCS}
Given a group $G$, its lower central series $\LCS_1(G)$, $\LCS_2(G)$, $\LCS_3(G), \cdots$ is defined inductively by $\LCS_{n+1}(G)$ $=$ $[G, \LCS_n(G)]$ and $\LCS_1(G)=G$, where $[H, K]$ $=$ $\langle xyx^{-1}y^{-1} : x \in H~{\textrm{and}}~y \in K \rangle$.
\end{definition}
\begin{notation}[$f_{r, T}$]
Given a tangle map $T$ and a rondle invariant $r$, $f_{r, T}$ denotes the linear extension of $r \circ T : TW_k \to \mathbb{Q}$ to the group algebra $\mathbb{Q} TW_k$.    
\end{notation}
\section{Proof of Theorem~\ref{thm:mainLCS}}\label{sec:proof:mainLCS}
\begin{lemma}\label{lem:twinSt}
Let $r$ be a rondle invariant of order $n-1$, and let $T$ be a tangle map.    
If $x \in J^{n}$, then $f_{r, T} (x)=0$.  
\end{lemma}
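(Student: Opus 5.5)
By linearity of $f_{r,T}$ and the definition of $J^n$, it suffices to show that $f_{r,T}$ vanishes on every element of the form $a\,x_1 x_2 \cdots x_n\, b$ with $a,b \in \mathbb{Q}TW_k$ and $x_i \in J$. Each $x_i$ is a finite $\mathbb{Q}$-linear combination of elements $c\,(\sigma_j\sigma_{j+1}\sigma_j - \sigma_{j+1}\sigma_j\sigma_{j+1})\,d$ with $c,d \in TW_k$. Expanding everything multilinearly, it is enough to prove
\[
f_{r,T}\bigl(w_0\, y_1\, w_1\, y_2\, w_2 \cdots y_n\, w_n\bigr)=0,
\]
where each $w_m \in TW_k$ is a twin and each $y_m = \sigma_{j_m}\sigma_{j_m+1}\sigma_{j_m} - \sigma_{j_m+1}\sigma_{j_m}\sigma_{j_m+1}$ is a Yang--Baxter difference. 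Indeed, the outer factors $a,b$ and the inner factors $c,d$ all get absorbed into the twins $w_m$.

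Next, I expand the product of the $n$ differences $y_m$. Write $u_m = \sigma_{j_m}\sigma_{j_m+1}\sigma_{j_m}$ and $v_m=\sigma_{j_m+1}\sigma_{j_m}\sigma_{j_m+1}$. For a subset $s\subset\{1,\dots,n\}$, let $P_s$ be the word obtained by choosing $v_m$ for $m\in s$ and $u_m$ for $m\notin s$. Then
\[
w_0 y_1 w_1 \cdots y_n w_n = \sum_{s\subset\{1,\dots,n\}} (-1)^{|s|}\, P_s ,
\]
so
\[
f_{r,T}\Bigl(\prod\Bigr)=\sum_s (-1)^{|s|}\, r\bigl(T(P_s)\bigr).
\]

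The main step is to realize this sum as the alternating sum \eqref{eq:VanishingSum} of Definition~\ref{def:FiniteOrder}. Via the identification of Proposition~\ref{prop:GeomRealization}, I take the configuration for $P_\emptyset$, stacking the configurations of $w_0,u_1,w_1,\dots,u_n,w_n$ in horizontal slabs. Each $u_m$ occupies its own slab, and in that slab the three arcs $j_m, j_m+1, j_m+2$ form a small triangle with three double points as vertices. Replacing $u_m$ by $v_m$ inside that slab is exactly one triple-point modification on this triangle, and the rest of the picture is untouched. Applying the tangle map $T$ embeds the configuration into a rondle $D=T(P_\emptyset)$ that carries $n$ numbered triangles, and $T(P_s)$ is precisely $D_s$.

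The triangles lie in distinct horizontal slabs, so they share no vertex, edge or face; they are therefore disjoint in the sense of Definition~\ref{def:DisjointTriangle}. Since $r$ has order $n-1$, the identity \eqref{eq:VanishingSum} applies to $D$ with these $n$ triangles, and the sum vanishes.

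I expect the main obstacle to be justifying this geometric identification carefully: that $T(P_s)$ really equals $D_s$ as rondles, independently of the chosen representative words. This holds because $T$ is well defined on twins, and because the triple-point modification is local, so it commutes with the fixed tangle embedding. One must also check that the modifications in different slabs can be performed simultaneously with the triangles staying disjoint. Both points reduce to choosing configurations in which each triangle sits inside a thin slab containing no other crossings.
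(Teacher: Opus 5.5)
Your proposal is correct and follows essentially the same route as the paper: you reduce by linearity to generators $w_0 y_1 w_1 \cdots y_n w_n$ of $J^n$, with $w_m \in TW_k$ and $y_m$ Yang--Baxter differences, expand into the alternating sum over subsets $s \subset \{1,\dots,n\}$, and recognize it as the vanishing sum (\ref{eq:VanishingSum}) for the $n$ disjoint triangles of $T(P_\emptyset)$. Your explicit multilinear reduction and the slab argument for disjointness spell out details that the paper's proof leaves implicit.
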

\begin{proof}
The ideal $J^{n}$ is generated by expressions of the following form: 
\begin{equation}\label{eq:altForm}
w_0(\sigma_{m_1} \sigma_{m_1+1} \sigma_{m_1} - \sigma_{m_1+1} \sigma_{m_1} \sigma_{m_1+1})w_1\cdots w_{n-1}(\sigma_{m_n} \sigma_{m_{n} +1} \sigma_{m_{n}} - \sigma_{m_{n} +1} \sigma_{m_{n}} \sigma_{m_{n}+1})w_{n}
\end{equation}
where each $w_j$ is an element of $TW_k$ and each $\sigma_{m_j}$ denotes a standard twin generator corresponding to a single double point.       
For an element of the form (\ref{eq:altForm}), consider 
a twin $p_0$ of the form: \[w_0 \tau_{m_1} w_1 \cdots w_{n-1} \tau_{m_{n}} w_{n}\]
where each $\tau_{m_j}$ denotes $\sigma_{m_j} \sigma_{m_j +1} \sigma_{m_j}$ or $\sigma_{m_j + 1} \sigma_{m_j} \sigma_{m_j + 1}$ and each $\sigma_{m_j}$ denotes a standard twin generator corresponding to a single double point, without cancellation with $w_{j-1}$ or $w_{j}$.  Let $t_j$ denote a triangle corresponding to $\tau_{m_j}$.   Note that any two triangles in $\{t_1, t_2, \dots, t_{n} \}$ share no  vertex, edge, or face.  Let $D$ be a link  projection corresponding to $T(p_0)$, and for each subset $s \subset \{1, 2, \dots, n \}$, let $D_s$ be the link projection obtained from $D$ by applying triple-point modifications to triangles corresponding to the label $t_j$ ($j \in s$).  Note that $T(p_0)$ corresponds to $D$ ($=D_\emptyset$).  Then 
expanding each factor produces the alternating sum over all subsets $s \subset \{1, 2, \ldots , n \}$,  
\begin{equation}\label{eq:FtoSum}
f_{r, T}(x) = \sum_{s \subset \{1, 2, \ldots, n \}} (-1)^{|s|} r (D_{s}).  
\end{equation}
The right-hand side of (\ref{eq:FtoSum}) vanishes by (\ref{eq:VanishingSum}) in  Definition~\ref{def:FiniteOrder}.  
\end{proof}   
\begin{lemma}\label{lem:Pone}
If $p \in {\LCS}_n (\overline{PTW_k})$,  then $p-1 \in J^n$.  
\end{lemma}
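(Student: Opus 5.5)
The proof will be by induction on $n$, in the style of the classical argument that the lower central series filtration lies inside the powers of the augmentation ideal. The induction will rest on two algebraic identities in $\mathbb{Q} TW_k$, valid for any group elements $a,b$:
\begin{align*}
ab-1 &= (a-1)(b-1) + (a-1) + (b-1),\\
aba^{-1}b^{-1}-1 &= \bigl((a-1)(b-1)-(b-1)(a-1)\bigr)a^{-1}b^{-1}.
\end{align*}
From the first identity, the set of group elements $g$ with $g-1 \in J^n$ is closed under products. It is also closed under inverses, since $g^{-1}-1 = -g^{-1}(g-1)$ and $J^n$ is a two-sided ideal. Hence this set is a subgroup. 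It therefore suffices to check the claim on generators of $\LCS_n(\overline{PTW_k})$, which are iterated commutators.

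\emph{Base case $n=1$.} I must show $p^{(j)}-1 \in J$ for each generator of $\overline{PTW_k}$. By Definition~\ref{def:smallpuretwin}, $p^{(j)}$ is reduced to $1$ using relations (\ref{eq:in}) and (\ref{eq:slide}) freely and (\ref{eq:YB}) exactly once. Relations (\ref{eq:in}) and (\ref{eq:slide}) hold in $TW_k$ itself, so they do not change the group element. We may therefore write $p^{(j)} = u\,\sigma_i\sigma_{i+1}\sigma_i\,v$ in $TW_k$ with $u\,\sigma_{i+1}\sigma_i\sigma_{i+1}\,v = 1$ (or with the two triple products exchanged). Then
\[
p^{(j)}-1 = u(\sigma_i\sigma_{i+1}\sigma_i-\sigma_{i+1}\sigma_i\sigma_{i+1})v \in J.
\]
By the subgroup property, every $p \in \overline{PTW_k}$ then satisfies $p-1\in J$.

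\emph{Inductive step.} $\LCS_{n+1}(\overline{PTW_k})$ is generated by commutators $[a,b]$ with $a \in \overline{PTW_k}$ and $b \in \LCS_n(\overline{PTW_k})$. By the base case $a-1 \in J$, and by the induction hypothesis $b-1\in J^n$. The second identity then gives
\[
[a,b]-1 \in J\cdot J^n + J^n\cdot J \subset J^{n+1},
\]
and the subgroup property finishes the step. Here I use that $J^n$ is the two-sided ideal generated by $n$-fold products, so $J J^n \subset J^{n+1}$ and multiplying on the right by the unit $a^{-1}b^{-1}$ stays inside the ideal.

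The main obstacle is the base case, which requires interpreting ``reduced to $1$ by applying (\ref{eq:YB}) exactly once'' as a genuine equality $p^{(j)} = u\tau v$ in $TW_k$, where $\tau$ is one side of (\ref{eq:YB}) and $u\tau' v=1$ with $\tau'$ the other side. Once this reading is fixed, the rest is the standard augmentation-ideal bookkeeping.
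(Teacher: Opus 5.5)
Your proof is correct and is essentially the paper's argument. Both run an induction on $n$. The base case $p-1\in J$ comes from the single Yang--Baxter reduction: the paper invokes Proposition~\ref{prop:smallpuretwin} and telescopes along the triple-point modifications. The inductive step uses $[a,b]-1=\bigl((a-1)(b-1)-(b-1)(a-1)\bigr)a^{-1}b^{-1}$, and the paper passes from commutators to products with $g_1\cdots g_m-1=\sum_i g_1\cdots g_{i-1}(g_i-1)$. Your subgroup-closure packaging is the same bookkeeping, made slightly more careful by explicitly handling inverses, which the paper leaves implicit.
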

\begin{proof}
We prove the claim by induction on $n$.   
\begin{enumerate}
\item Case~$n=1$.   If $p \in {\LCS}_1 (\overline{PTW_k})$ $=$ $\overline{PTW_k}$, then $p$ is deformed into $1$ by a series of triple-point modifications (Proposition~\ref{prop:smallpuretwin}).   Then
\begin{equation}\label{eq:InductionFirst}
p-1 \in J.  
\end{equation}
\item Case~$n$.  Suppose that the claim for $n-1$ holds, i.e., 
\begin{equation}\label{eq:Induction}
p \in \LCS_{n-1}(\overline{PTW_k})
\Rightarrow p-1 \in J^{n-1}.  
\end{equation} 
If $p \in {\LCS}_n  (\overline{PTW_k})$, then 
\[
p = [p_1, q_1][p_2, q_2]\cdots [p_m, q_m], 
\]
where $p_i \in \overline{PTW_k}$ and $q_i \in \LCS_{n-1}(\overline{PTW_k})$.   
Therefore it suffices to show that each $[p_i,q_i]-1$ belongs to $J^n$.  
Here, recall that, in general, the equation 
\[
g_1 g_2 \cdots g_m -1 = \sum_{i=1}^{m} g_1 \cdots g_{i-1} (g_i -1)
\]
holds.  
Next, let $a=p_i$ and $b=q_i$.  
For any $a, b \in TW_k$, the following identity holds:  
\[
[a, b] -1 = (ab - ba) a^{-1} b^{-1} = (
 (a -1)(b -1) - (b -1) (a -1)) a^{-1} b^{-1}.  
\] 
Thus $[p_i, q_i]-1 \in J^n$, since (\ref{eq:InductionFirst}) of Case~$n=1$ gives  $p_i -1 \in J$, and (\ref{eq:Induction}) implies  $q_i -1 \in J^{n-1}$.  
\end{enumerate}
\end{proof}
\noindent \emph{Proof of Theorem~\ref{thm:mainLCS}.}  
By the definition of ``differ by'', there exists a tangle map $T_x$ such that $T_x(1)=C$ and $T_x(p)=C'$.    
By Lemmas~\ref{lem:twinSt} and \ref{lem:Pone}, we have $f_{r, T_x} (p) - f_{r, T_x} (1)=f_{r, T_x} (p-1)=0$.  
\hfill$\Box$
\section{Diagrammatic proof of Theorem~\ref{thm:mainLCS}}\label{sec:DiagrammaticProof}
\begin{definition}[triangle]
Let $C$ be a representative of a twin.   If there exists a disk in $C$ that looks like the one on the left or the right in Figure~\ref{fig:triaglePair}, then we call the disk a \emph{triangle}.  For a certain $i$, a triangle corresponding to $\sigma_i \sigma_{i+1} \sigma_i$ is called positive (Figure~\ref{fig:triaglePair}, left), and a triangle corresponding to $\sigma_{i+1} \sigma_i \sigma_{i+1}$ is called negative  (Figure~\ref{fig:triaglePair}, right).    
When specifying the sign $\varepsilon$ ($(-\varepsilon)$,~resp.), we call the triangle an $\varepsilon$-triangle ($(-\varepsilon)$-triangle,~resp.), where $\varepsilon=\pm$ corresponds to $(-\varepsilon)=\mp$, with the same order of signs preserved.     
\end{definition}
\begin{figure}[htbp] 
   \centering
   \includegraphics[width=5cm]{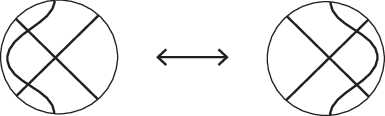} 
   \caption{A positive triangle (left) and a negative triangle (right).  They are exchanged by the triple-point modification.}
   \label{fig:triaglePair}
\end{figure}
\begin{definition}[$T_n$-similar, $T_n$-trivial]
Let $K$ ($L$,~resp.) be a twin and let $\widetilde{K}$ ($\widetilde{L}$,~resp.) be a representative of $K$ ($L$,~resp.).  Suppose that $A_1$, $A_2$, \dots, $A_n$ are non-empty sets of disjoint triangles in $\widetilde{K}$ or $\widetilde{L}$.  We say that $K$ and $L$ are $T_n$-\emph{similar} if there exist $A_1$, $A_2$, \dots, $A_n$ such that
\begin{itemize}
\item $A_i \cap A_j = \emptyset$ ($i \neq j$), 
\item Replacing each $\varepsilon$-triangle with a $(- \varepsilon)$-triangle at each  triangle in any non-empty subfamily of $A_1$, $A_2$, \dots, $A_n$ transforms $\widetilde{K}$ into $\widetilde{L}$, or vice versa.  
In particular, 
if $L$ has a representative $\widetilde{L}$ with no double points, then $K$ is called $T_n$-\emph{trivial}.  
\end{itemize}
\end{definition}
Example~\ref{egTsimilar} shows a case where the definition of $T_n$-similarity applies.  In particular, this example shows how the conditions on disjoint $n$ sets of triangles ensure that two rondles are $T_n$-similar.  
\begin{example}\label{egTsimilar}
In Figure~\ref{fig:GeneThreeTri},  we see  an example of a $T_3$-trivial twin with $A_1=\{ t_{11}, t_{12}, t_{13}, t_{14} \}$, $A_2=\{ t_{21}, t_{22}, t_{23}, t_{24} \}$, $A_3=\{ t_{31}, t_{32} \}$. 
\begin{figure}[htbp] 
   \centering
   \includegraphics[width=12cm]{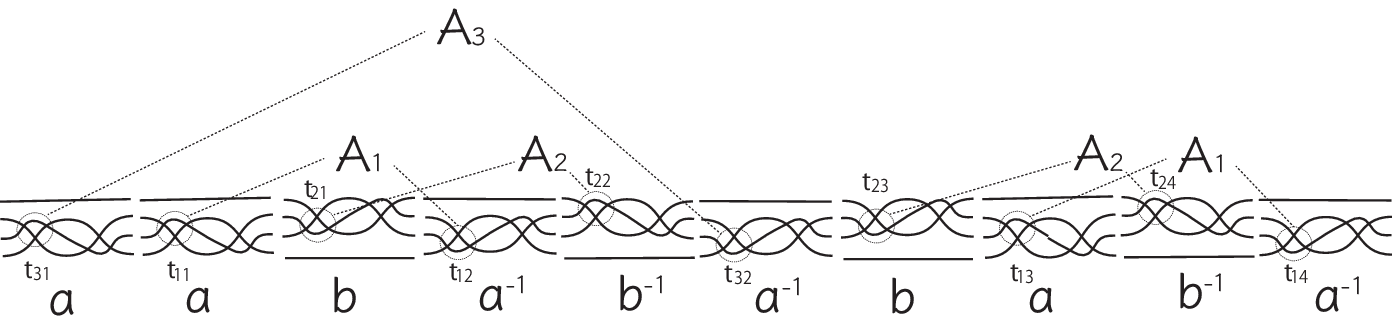} 
   \caption{An example of $T_3$-trivial twin.  This figure originally represented a vertical twin, rotated by $+ \frac{\pi}{2}$ to be drawn horizontally.}
   \label{fig:GeneThreeTri}
\end{figure} 
\end{example}
For every triple-point modification, we define the formal relation by 
\[
\begin{picture}(20,10)
\put(6.5,-4){\line(0,1){13}}
\put(0,-4){\line(1,1){13}}
\put(13,-4){\line(-1,1){13}}
\end{picture} := 
\begin{picture}(30,10)
\qbezier(4.5,6)(6.5,7.5)(6.5,9)
\qbezier(4.5,-1)(0,3)(4.5,6)
\qbezier(4.5,-1)(6.5,-2.5)(6.5,-4)
\put(0,-4){\line(1,1){13}}
\put(13,-4){\line(-1,1){13}}
\put(0,-8){\textrm{\tiny pos.}}
\put(20,0){-}
\end{picture} 
\begin{picture}(20,0)
\qbezier(8.5,6)(6.5,7.5)(6.5,9)
\qbezier(8.5,-1)(13,3)(8.5,6)
\qbezier(8.5,-1)(6.5,-3.5)(6.5,-4)
\put(0,-4){\line(1,1){13}}
\put(13,-4){\line(-1,1){13}}
\put(0,-8){\textrm{\tiny neg.}}
\end{picture}
.\]
It implies
\begin{align}
\label{eq:Trelations}
\begin{picture}(20,10)
\qbezier(4.5,6)(6.5,7.5)(6.5,9)
\qbezier(4.5,-1)(0,3)(4.5,6)
\qbezier(4.5,-1)(6.5,-2.5)(6.5,-4)
\put(0,-4){\line(1,1){13}}
\put(13,-4){\line(-1,1){13}}
\put(0,-8){\textrm{\tiny pos.}}
\end{picture} = 
\begin{picture}(20,0)
\qbezier(8.5,6)(6.5,7.5)(6.5,9)
\qbezier(8.5,-1)(13,3)(8.5,6)
\qbezier(8.5,-1)(6.5,-3.5)(6.5,-4)
\put(0,-4){\line(1,1){13}}
\put(13,-4){\line(-1,1){13}}
\put(0,-8){\textrm{\tiny neg.}}
\end{picture}
+ \begin{picture}(0,10)
\put(6.5,-4){\line(0,1){13}}
\put(0,-4){\line(1,1){13}}
\put(13,-4){\line(-1,1){13}}
\end{picture} \qquad, \quad
\begin{picture}(20,0)
\qbezier(8.5,6)(6.5,7.5)(6.5,9)
\qbezier(8.5,-1)(13,3)(8.5,6)
\qbezier(8.5,-1)(6.5,-3.5)(6.5,-4)
\put(0,-4){\line(1,1){13}}
\put(13,-4){\line(-1,1){13}}
\put(0,-8){\textrm{\tiny neg.}}
\end{picture} = 
\begin{picture}(20,10)
\qbezier(4.5,6)(6.5,7.5)(6.5,9)
\qbezier(4.5,-1)(0,3)(4.5,6)
\qbezier(4.5,-1)(6.5,-2.5)(6.5,-4)
\put(0,-4){\line(1,1){13}}
\put(13,-4){\line(-1,1){13}}
\put(0,-8){\textrm{\tiny pos.}}
\end{picture} 
- \begin{picture}(0,10)
\put(6.5,-4){\line(0,1){13}}
\put(0,-4){\line(1,1){13}}
\put(13,-4){\line(-1,1){13}}
\end{picture}\qquad.  
\end{align}
By applying these relations (\ref{eq:Trelations}) to a twin, a curve with  some triple points appears, which is called a \emph{singular twin}.    
Let $t_{i1}$ $t_{i2}, \dots, t_{i\alpha(i)}$ be disjoint $\varepsilon_{i1}$, $\varepsilon_{i2}, \dots, \varepsilon_{i\alpha(i)}$-triangles,   respectively, in a representative of a configuration and let $A_i = \{t_{i1}, t_{i2}, \dots, t_{i\alpha(i)} \}$.   
Let $K \left( \begin{matrix} 1&2& \dots & k \\ i_1& i_2 & \dots & i_k \end{matrix} \right)$ be a singular twin with triple points obtained by applying triple-point modifications at $t_{11}, t_{12}, \dots, t_{1 i_1 -1}$, $t_{21}, t_{22}, \dots, t_{2 i_2 -1}$, $t_{k1}, t_{k2}, \dots, t_{k i_k -1}$ to $K$, and then  
replacing triangles $t_{1 i_1}, t_{2 i_2}, \dots t_{k i_k}$ with triple points.  
\begin{example}
Let $\widetilde{K}$ be the representative of the twin $K$ as in Figure~\ref{fig:GeneThreeTri} and let $K \left( \begin{smallmatrix} 1 \\ \infty \end{smallmatrix} \right)$ be the twin  obtained by applying triple-point modifications to the triangles in $A_1$.   
In the following, $t_{1i}$ indicates the triple-point modification at $t_{1i}$.   A repeated application of (\ref{eq:Trelations}) yields:  

\begin{tabular}{ccccccccc}
$\widetilde{K}$ & $\stackrel{t_{11}}{\longrightarrow}$ & $\cdot$ & $\stackrel{t_{12}}{\longrightarrow}$ & $\cdot$ & $\stackrel{t_{13}}{\longrightarrow}$ & $\cdot$ & $\stackrel{t_{14}}{\longrightarrow}$ & $K \left( \begin{smallmatrix} 1 \\ \infty \end{smallmatrix} \right)$  \\
&$\searrow$&$-$&$\searrow$&$+$&$\searrow$&$-$& $\searrow$ & $+$
\\
&&$K \left( \begin{smallmatrix} 1 \\ 1 \end{smallmatrix} \right)$&&$K \left( \begin{smallmatrix} 1 \\ 2 \end{smallmatrix} \right)$&&$K \left( \begin{smallmatrix} 1 \\ 3 \end{smallmatrix} \right)$&\ \quad& $K \left( \begin{smallmatrix} 1 \\ 4 \end{smallmatrix} \right)$.   \\
\end{tabular}

\noindent For example, $K \left( \begin{smallmatrix} 1 \\ 3 \end{smallmatrix} \right)$ ($\widetilde{K \left( \begin{smallmatrix} 1 \\ 3 \end{smallmatrix} \right)}$,~resp.) is a singular twin (representative,~resp.) with a triple point by applying triple-point modifications at $t_{11}$  and $t_{12}$ and by replacing the triangle $t_{13}$ with the triple point.  For $K \left( \begin{smallmatrix} 1 \\ 3 \end{smallmatrix} \right)$, let $K \left( \begin{smallmatrix} 1& 2 \\ 3 & \infty \end{smallmatrix} \right)$ be the singular twin obtained by applying triple-point modifications to the triangles in $A_2$ included by $K \left( \begin{smallmatrix} 1 \\ 3 \end{smallmatrix} \right)$.   In the following diagram, $t_{2i}$ denotes the application of the triple-point modification at $t_{2i}$.  By repeatedly applying (\ref{eq:Trelations}) to the representative $\widetilde{K \left( \begin{smallmatrix} 1 \\ 3 \end{smallmatrix} \right)}$,

\begin{tabular}{ccccccccc}
$\widetilde{K \left( \begin{smallmatrix} 1 \\ 3 \end{smallmatrix} \right)}$ & $\stackrel{t_{21}}{\longrightarrow}$ & $\cdot$ & $\stackrel{t_{22}}{\longrightarrow}$ & $\cdot$ & $\stackrel{t_{23}}{\longrightarrow}$ & $\cdot$ & $\stackrel{t_{24}}{\longrightarrow}$ & $K \left( \begin{smallmatrix} 1 & 2 \\ 3 & \infty \end{smallmatrix} \right)$  \\
&$\searrow$&$+$&$\searrow$&$-$&$\searrow$&$+$& $\searrow$ & $-$
\\
&&$K \left( \begin{smallmatrix} 1 & 2 \\ 3 & 1 \end{smallmatrix} \right)$&&$K \left( \begin{smallmatrix} 1 & 2 \\ 3 & 2 \end{smallmatrix} \right)$&&$K \left( \begin{smallmatrix} 1 & 2 \\ 3 & 3 \end{smallmatrix} \right)$&\ \quad& $K \left( \begin{smallmatrix} 1 & 2 \\ 3 &  4 \end{smallmatrix} \right)$.   \\
\end{tabular}

\noindent For example, $K \left( \begin{smallmatrix} 1 & 2 \\ 3 & 2 \end{smallmatrix} \right)$ is a singular twin with triple points by applying triple-point modifications at $t_{11}$, $t_{12}$, $t_{21}$, and by replacing the triangles  $t_{13}$ and $t_{22}$ with the two triple points.  
\end{example} 

By a similar argument to that in  \cite[Lemma~3, p.289]{Ohyama1995}, it is elementary to see: 
\begin{lemma}\label{lem:Kformula}
Let $f_m$ be an invariant of order $m$ $(\le n)$  with respect to triple-point modifications.  
If $K$ and $L$ are $T_n$-similar,  then we have 
\begin{equation}\label{eq:Kformula}
f_m (K) = f_m (L) + \sum_{1 \le i_j \le \alpha (j), 1 \le j \le n}  \varepsilon_{1i_1}\varepsilon_{2i_2} \cdots \varepsilon_{ni_n}    f_m \left(  K \left( \begin{matrix} 1&2& \dots & n \\ i_1& i_2 & \dots & i_n \end{matrix} \right) \right).  
\end{equation}
\end{lemma}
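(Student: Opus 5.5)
We extend $f_m$ linearly to formal $\mathbb{Q}$-combinations of singular twins, with each triple point resolved by (\ref{eq:Trelations}), i.e.\ a triple point equals (pos.)$-$(neg.). The plan is to telescope the modifications inside each family $A_j$, one family at a time, and then use the order bound to kill the remainder.

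\textbf{Step 1: one family.} Order the triangles of $A_1$ as $t_{11},\dots,t_{1\alpha(1)}$ and apply the triple-point modifications in that order. Each step replaces an $\varepsilon$-triangle by a $(-\varepsilon)$-triangle. By (\ref{eq:Trelations}), the value before the step equals the value after the step plus $\varepsilon\cdot$(the diagram with that triangle replaced by a triple point). Summing this telescoping chain gives
\[
f_m(K)=f_m\bigl(K\bigl(\begin{smallmatrix}1\\ \infty\end{smallmatrix}\bigr)\bigr)+\sum_{i_1=1}^{\alpha(1)}\varepsilon_{1i_1} f_m\bigl(K\bigl(\begin{smallmatrix}1\\ i_1\end{smallmatrix}\bigr)\bigr),
\]
as in the worked example. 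Disjointness of the triangles guarantees that each later triangle $t_{1i}$ is still present after the earlier modifications, so the chain is well defined.

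\textbf{Step 2: iterate over all families.} Apply Step 1 to the family $A_2$ in both the fully modified term and every singular term, then to $A_3$, and so on up to $A_n$. The triangles stay available because $A_i\cap A_j=\emptyset$ and the triangles are disjoint. Expanding the whole tree produces three kinds of terms:
\begin{itemize}
\item the term in which every family is fully modified;
\item for each nonempty proper choice of families, terms with triple points in exactly those families, some of them fully modified in the remaining families;
\item the terms with one triple point from each of the $n$ families, carrying coefficient $\varepsilon_{1i_1}\cdots\varepsilon_{ni_n}$.
\end{itemize}
The fully modified term is $f_m(L)$ by the definition of $T_n$-similarity. The last kind is exactly the sum in (\ref{eq:Kformula}).

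\textbf{Step 3 (main obstacle): the mixed terms.} I must handle the terms in which only some families contribute a triple point while the others are fully modified. Two readings are possible.

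\emph{Reading via the definition.} In the intended definition (as in Ohyama's $C_n$-similarity), modifying any nonempty subfamily of $A_1,\dots,A_n$ already yields $L$. Then a fully modified family can equally be left unmodified. So I would organize the expansion by inclusion--exclusion over subfamilies rather than sequentially: write $f_m(K)-f_m(L)$ as the alternating sum $\sum_{S}(-1)^{|S|}f_m(K_S)$ over subfamilies $S$, all nonempty $K_S$ being equal to $L$. Each family then contributes a factor (unmodified $-$ modified), which telescopes into $\sum_{i}\varepsilon_{ji}(\text{triple point at } t_{ji})$. The product over $j=1,\dots,n$ gives precisely the $n$-fold sum. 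This product structure is what I expect to need careful bookkeeping: one must check that the partial modifications of earlier triangles within a family are consistent with the definition of $K\bigl(\begin{smallmatrix}1&\cdots&n\\ i_1&\cdots&i_n\end{smallmatrix}\bigr)$.

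\emph{Reading via the order bound.} Alternatively, a term with at least $m+1$ triple points vanishes, since $f_m$ has order $m$: by (\ref{eq:VanishingSum}), the resolution of $m+1$ disjoint triple points is an alternating sum over $2^{m+1}$ diagrams, which is $0$. This shows that when $m<n$ the $n$-fold sum itself vanishes, and then the formula says $f_m(K)=f_m(L)$.
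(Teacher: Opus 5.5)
Your first reading in Step~3 is correct and is essentially the Ohyama-style argument the paper relies on (the paper only cites \cite[Lemma~3]{Ohyama1995} and illustrates the family-by-family telescoping). Since every nonempty subfamily yields $L$, the sum $f_m(K)-f_m(L)=\sum_S(-1)^{|S|}f_m(K_S)$ factors over the families, each factor telescopes by (\ref{eq:Trelations}), and the product term indexed by $(i_1,\dots,i_n)$ is by construction exactly $\varepsilon_{1i_1}\cdots\varepsilon_{ni_n}K\bigl(\begin{smallmatrix}1&\cdots&n\\ i_1&\cdots&i_n\end{smallmatrix}\bigr)$, so no further bookkeeping is needed.

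Drop the hedge and the second reading, though: the definition of $T_n$-similar explicitly requires every nonempty subfamily to give $L$, and the order bound cannot kill the mixed terms (a term with $j\le n-1$ triple points vanishes only if $j\ge m+1$, which fails for $j=1$ whenever $m\ge 1$). That reading therefore only deduces Claim~\ref{claim:SimToInv} from (\ref{eq:Kformula}) rather than proving (\ref{eq:Kformula}), which is a formal identity valid for any invariant extended by (\ref{eq:Trelations}), with the order hypothesis unused.
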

By definition, $K \left( \begin{matrix} 1&2& \dots & n \\ i_1& i_2 & \dots & i_n \end{matrix} \right)$ contains $n$ triple points.   Then by (\ref{eq:Kformula}),  we have 
\begin{claim}\label{claim:SimToInv}
If $K$ and $L$ are $T_n$-similar and $f_m$ is an invariant of order $m$ $(\le n-1)$ with respect to triple-point modifications, then 
\[
f_m (K) = f_m (L).  
\] 
\end{claim}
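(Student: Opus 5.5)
I will deduce the claim directly from Lemma~\ref{lem:Kformula}. The one thing to establish is that every correction term on the right-hand side of (\ref{eq:Kformula}) vanishes when $m \le n-1$.

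\textbf{Step 1: apply the expansion.} Since $K$ and $L$ are $T_n$-similar, I fix the families $A_1, \dots, A_n$ of disjoint triangles from the definition. Lemma~\ref{lem:Kformula} applies to $f_m$, because $m \le n-1 \le n$. It gives
\[
f_m(K) = f_m(L) + \sum \varepsilon_{1i_1}\cdots\varepsilon_{ni_n}\, f_m\!\left(K\left(\begin{matrix} 1&\dots&n\\ i_1&\dots&i_n\end{matrix}\right)\right),
\]
where each singular twin in the sum has $n$ triple points. These triple points come from the triangles $t_{1i_1}, \dots, t_{ni_n}$, which are pairwise disjoint.

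\textbf{Step 2: each singular term is an alternating sum.} Here $f_m$ on a singular twin means its extension through (\ref{eq:Trelations}). Resolving each of the $n$ triple points as (pos.) $-$ (neg.) turns each singular term into
\[
\sum_{s\subset\{1,\dots,n\}} (-1)^{|s|} f_m(D_s),
\]
up to a global sign. Here $D=D_\emptyset$ is the resolution with every chosen triangle positive, and $D_s$ applies the triple-point modification at the triangles indexed by $s$. This is the same expansion as (\ref{eq:FtoSum}) in the proof of Lemma~\ref{lem:twinSt}.

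\textbf{Step 3: vanishing.} The triangles are pairwise disjoint, so Definition~\ref{def:FiniteOrder} applies with these $n$ triangles. Since $f_m$ has order $m \le n-1$, the defining identity (\ref{eq:VanishingSum}) holds for $m+1$ disjoint triangles. I still need it for $n \ge m+1$ triangles, and I would prove this by a short induction. Split the $n$-fold alternating sum according to whether $n \in s$. This writes it as the difference of two $(n-1)$-fold alternating sums, one on $D$ and one on $D_{\{n\}}$. Both vanish by the inductive hypothesis, which holds because $n-1 \ge m+1$ and the remaining $n-1$ triangles stay disjoint in both curves. Hence every correction term is $0$, and $f_m(K)=f_m(L)$.

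I expect the main obstacle to be the bookkeeping in Step 2. One has to check that the iterated use of (\ref{eq:Trelations}) really yields the full signed cube over the $n$ triangles $t_{ji_j}$. One also has to check that the other modifications already made, at $t_{j1}, \dots, t_{j,i_j-1}$, are simply part of the fixed base curve $D$ and do not interfere with those triangles. That they do not interfere follows from disjointness within each $A_j$ and from $A_i \cap A_j = \emptyset$.
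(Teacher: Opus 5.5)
Your proposal is correct and follows the paper's route: apply Lemma~\ref{lem:Kformula} and observe that every correction term is $f_m$ evaluated on a singular twin with $n > m$ triple points, so each term vanishes. Your Steps 2 and 3 make explicit what the paper treats as immediate from the definition of order. These are the expansion of each singular term into the signed cube over $n$ disjoint triangles, and the induction that extends the vanishing from $m+1$ triangles to $n$ triangles.
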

If $C$ and $C'$ are  two rondles which differ by a twin $p \in \LCS_n (\overline{PTW_k})$, then $C$ and $C'$ are $T_n$-similar.    
Therefore Claim~\ref{claim:SimToInv} applies, which proves  Theorem~\ref{thm:mainLCS}.  
$\hfill\Box$

\section{Infinitely many curves that match up to a given order}\label{sec:construction}
{\renewcommand{\theenumi}{\roman{enumi}}
In this section, we prove Theorem~\ref{thm:mainInf}. 
Theorem~\ref{thm:mainLCS} reduces the proof of  Theorem~\ref{thm:mainInf} to finding examples.  
By Theorem~\ref{thm:mainLCS}, letting $C_i = C^{(0)}_i$, it suffices to construct doubly indexed rondles $C_i, C^{(1)}_i, C^{(2)}_i, \cdots$ ($i=1, 2, 3, \dots$) satisfying Conditions~(\ref{Condi:PureTwin}),  (\ref{Condi:Different}), and (\ref{Condi:Index}) as follows: 
\begin{enumerate}
\item $C_i$ and $C^{(n)}_i$ differ by a twin $p \in \LCS_n (\overline{PTW_k})$. \label{Condi:PureTwin} 
\item $C^{(m)}_i \neq C^{(n)}_i$ ($m < n$) as rondles.  
 \label{Condi:Different}
\item $C^{(n)}_i \neq C^{(n)}_j$ ($i \neq j$) as rondles.\label{Condi:Index}
\end{enumerate}
}

Firstly, we consider $k=4$.  

Let $a$ and $b$ be twins as in Figure~\ref{fig:GeneFifthTri}, and let $\langle a, b \rangle$ ($\subset PTW_4$) be the subgroup generated by $a^{\pm 1}$ and $b^{\pm 1}$.  
By definition, it is an example of a small pure twin group (Definition~\ref{def:smallpuretwin}) since each of $a$, $a^{-1}$, $b$, and $b^{-1}$ reduces to $1$ by a single application of (\ref{eq:YB}) exactly once, with relations (\ref{eq:in}) and (\ref{eq:slide}) freely used. 
Hence the construction in Figure~\ref{fig:example} gives $C_i$ and $C^{(n)}_i$ which differ by $p \in \LCS_n(\langle a,b\rangle)$.  
Thus     
Condition~(\ref{Condi:PureTwin}) holds.      
In particular, using  generators $a^{\pm 1}, b^{\pm 1} \in \overline{PTW_4}$ as in Figure~\ref{fig:GeneFifthTri}, we have a pure twin $p \in \LCS_n (\overline{PTW_4})$ running over $
b, [a, b], [a, [a, b]], [a,[a, [a, b]]], \cdots 
$ (Figure~\ref{fig:GeneFifthTri}), which gives the corresponding rondles $C_i, C^{(1)}_i, C^{(2)}_i, C^{(3)}_i, \dots$ in the statement of Theorem~\ref{thm:mainInf} (we will later prove that they satisfy Condition~(\ref{Condi:Different})).  
Note that by Theorem~\ref{thm:mainLCS}, if $C_i$ ($i=1, 2, 3, \dots$) is a twin closure  
and $C^{(n)}_i$ includes $p \in \LCS_n (\overline{PTW_4})$ as in Figure~\ref{fig:example}, 
$f(C_i)=f(C^{(n)}_i)$ for any invariant $f$ of order less than $n$ with respect to triple-point modifications.       
\begin{figure}[htbp] 
   \centering
   \includegraphics[width=14cm]{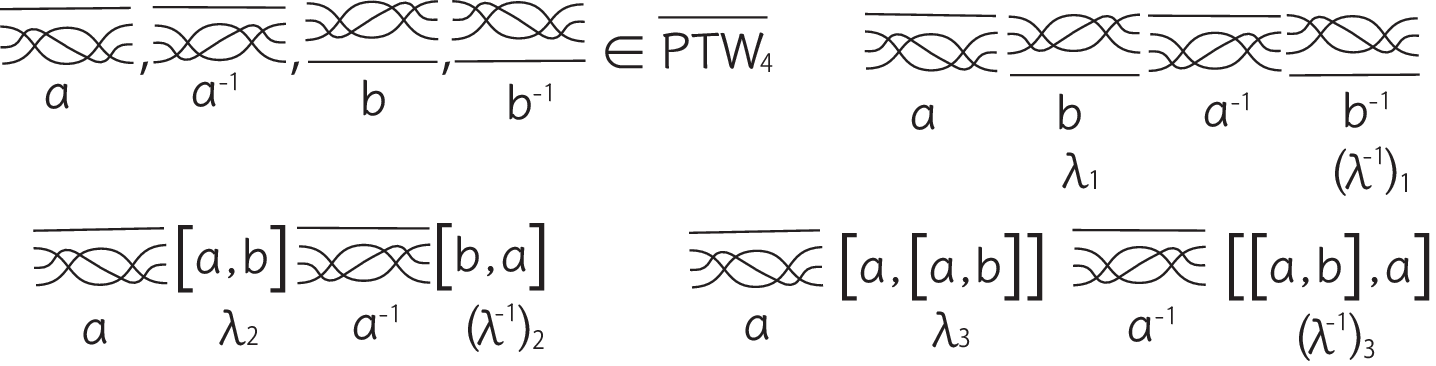} 
   \caption{Generators $a^{\pm}$, $b^{\pm}$ and   
 the sequence $\lambda_1=b$, $\lambda_2=[a, b]$, $\lambda_3=[a, [a,b]]$, \dots, $\lambda_n=[a \cdots [a, [a, b]]\cdots]$.  }
   \label{fig:GeneFifthTri}
\end{figure}
\begin{figure}[htbp] 
   \centering
   \includegraphics[width=12cm]{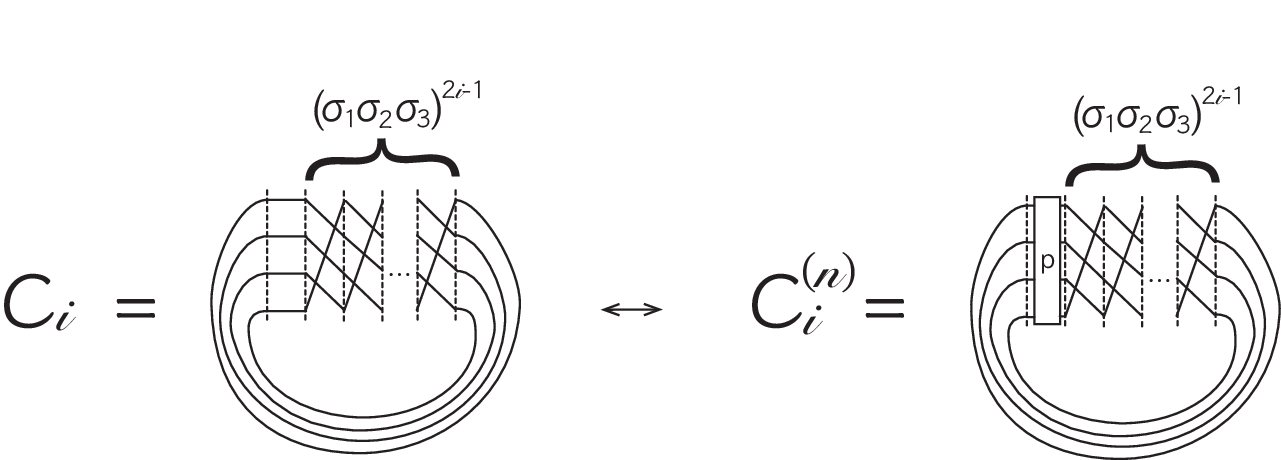} 
   \caption{Construction of prime curves from $TW_4$}
   \label{fig:example}
\end{figure}

Next, we show that the sequence  
 $C_i, C^{(1)}_i, C^{(2)}_i, C^{(3)}_i, \dots$ satisfies  Condition~(\ref{Condi:Different}), and that any pair $C^{(n)}_i, C^{(n)}_j$ satisfies Condition~(\ref{Condi:Index}).     
We prepare Notation~\ref{not:reduced} and recall Fact~\ref{fact:reduced}.  
\begin{notation}\label{not:reduced}
Following \cite{Khovanov1996}, a self-tangency modification \selftanMb decreasing double points is called a ``$(-2)$ move''.  Let $D$ be a representative of a rondle $C$ and let $D^r$ be a representative with no bigons where it is obtained from $D$ by successively applying only $(-2)$ moves.    
\end{notation}
\begin{fact}[Khovanov \cite{Khovanov1996}]\label{fact:reduced}
If $D$ and $D'$ are two representatives of a rondle $C$, then $D^r$ and ${D'}^r$ are identified up to diffeomorphisms of $\mathbb{R}^2$ with compact support.  In other words, $D^r$ is uniquely determined.   
\end{fact}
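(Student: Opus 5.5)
The statement is a uniqueness-of-normal-form result, so I would prove it by a rewriting (diamond lemma) argument on representatives considered up to compactly supported diffeomorphism. The rewriting steps are the $(-2)$ moves. Each $(-2)$ move lowers the number of double points by two, so every sequence of $(-2)$ moves starting from $D$ terminates; hence some reduced $D^r$ exists. By Newman's lemma, local confluence of $(-2)$ moves then gives that the terminal diagram reachable from a given $D$ is unique. It is then enough to show that $D^r$ does not change under a single self-tangency modification, since a rondle equivalence is a finite chain of such modifications and diffeomorphisms, and diffeomorphisms clearly commute with everything.

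\textbf{Step 1: local confluence.} Let $B_1$ and $B_2$ be two empty bigons in $D$, i.e.\ bigons whose interiors meet no other part of the curve, so that a $(-2)$ move can be applied to each. I would split into cases according to how they meet; since $D$ has only transverse double points, they cannot overlap in their interiors.
\begin{enumerate}[(a)]
\item If $B_1\cap B_2=\emptyset$, the two moves have disjoint supports and commute.
\item If they share a vertex $v$ but no edge, then $B_1$ and $B_2$ sit in opposite quadrants at $v$. The same two arcs then cross three times consecutively, and removing either bigon leaves the same picture, namely a single crossing of these two arcs, up to isotopy.
\item If they share an edge, then some arc $y$ forms a bigon with $x$ on one side and with $z$ on the other. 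After removing either bigon, the other pair still bounds an empty bigon. Removing it as well yields in both orders the diagram in which $y$ has been pushed off both $x$ and $z$, up to isotopy.
\end{enumerate}
In every case the two one-step reductions have a common further reduction, which is local confluence.

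\textbf{Step 2: invariance under self-tangency.} Suppose $D'$ is obtained from $D$ by a $(+2)$ move. Then $D$ is obtained from $D'$ by a $(-2)$ move, so every reduced descendant of $D$ is also a reduced descendant of $D'$. By the uniqueness from Step 1, $D'^r=D^r$. The case of a $(-2)$ move is symmetric. Induction along the chain of moves connecting two representatives of $C$ then gives the fact.

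The main obstacle is making the case analysis in Step 1 exhaustive. This means checking carefully that empty bigons can only meet in the configurations (a)--(c), including the degenerate situations where the two arcs of a bigon belong to the same strand. It also means checking that each comparison really holds up to compactly supported isotopy, not merely combinatorially. I would handle this by examining the local picture in a small disk containing $B_1\cup B_2$, where the curve consists of at most three arcs.
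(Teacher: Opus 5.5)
Your case~(c) is where the argument breaks, and it cannot be patched. Suppose two empty bigons $B_1,B_2$ share an edge $e\subset y$ with endpoints $c_1,c_2$. Then both bigons have vertex set $\{c_1,c_2\}$. Their remaining edges $e_1,e_2$ are the two edges of the strand crossing $y$ at $c_1$, and both run to $c_2$. So $e_1\cup e_2$ is an entire component $S$: an embedded circle meeting the rest of the diagram only at $c_1,c_2$.

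There are no separate arcs $x,z$. Removing $B_1$ deletes both $c_1$ and $c_2$, so $B_2$ does not survive. The two one-step reductions leave the crossingless circle $S$ on opposite sides of $y$, and these need not have a common reduct.

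For example, let $D$ be two circles meeting in two points. The lens and a crescent share an edge. Removing the lens gives two side-by-side circles; removing the crescent gives two nested circles. Both are bigon-free, and no compactly supported diffeomorphism of $\mathbb{R}^2$ changes nesting. The two crescents also share both vertices but no edge, so your case~(b) has a degenerate subcase with no three consecutive crossings.

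So local confluence fails. In fact the statement, read literally for multi-component rondles in the plane, is false: $D^r$ is not even determined by $D$. This reading includes tangencies between different components, which the tangle map requires because $\sigma_i^2=1$ can involve two components.

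What survives is the one-component case, which is all the paper actually uses. The curves to which Corollary~\ref{cor:reduced} is applied are closures of $p\sigma_1\sigma_2\cdots\sigma_{k-1}$ with $p$ pure, hence connected.
\begin{itemize}
\item Configuration~(c) needs the extra component $S$.
\item The degenerate subcase of~(b) needs either two components, or a single component with Gauss word $uvuv$, which is not planar.
\end{itemize}
So for connected $C$ only your cases~(a) and~(b) occur. With Newman's lemma and your Step~2, the proof is then complete.

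The paper itself gives no proof of this Fact; it cites Khovanov. In Section~\ref{sec:appendix} it reproduces Khovanov's argument for twins: Lemma~\ref{lem:minimalDiag} reorders a chain of moves so that every $(-2)$ move precedes every $(+2)$ move. This is the same local analysis in different packaging. A $(+2)$ move followed by a $(-2)$ move is exactly a pair of one-step reductions out of the middle diagram. Its ``exchange'' case is your~(a), and its two ``cancel'' pictures are $B_1=B_2$ and your~(b).

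In those cases the two reducts are joinable in at most one step each. So confluence follows directly, without Newman's lemma; termination is needed only for the existence of $D^r$. Note that the ``exactly two cases'' in Lemma~\ref{lem:minimalDiag} likewise omits the free-circle configuration. That is harmless for twins, because monotone arcs have no closed components.
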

\begin{corollary}\label{cor:reduced}
Let $|D|$ be the number of double points in a representative of a rondle, and let $D^r$ be as in Fact~\ref{fact:reduced}.   If $|D^r| \neq |{D'}^r|$, then $D^r$ and ${D'}^r$ correspond to different rondles.   
\end{corollary}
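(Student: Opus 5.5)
The corollary should follow directly from Fact~\ref{fact:reduced}, and I would argue by contraposition. Suppose $D^r$ and ${D'}^r$ represent the same rondle $C$. Then $D^r$ is itself a representative of $C$ that has no bigons. Applying Notation~\ref{not:reduced} to the representative $D^r$, no $(-2)$ move is available, so the reduction of $D^r$ is $D^r$ itself, that is, $(D^r)^r = D^r$. In the same way $({D'}^r)^r = {D'}^r$.

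Since $D^r$ and ${D'}^r$ are two representatives of the same rondle $C$, Fact~\ref{fact:reduced} says that their reductions agree up to a compactly supported diffeomorphism of $\mathbb{R}^2$. So $D^r$ and ${D'}^r$ are identified by such a diffeomorphism.

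A diffeomorphism of $\mathbb{R}^2$ maps transverse double points bijectively onto transverse double points. It therefore preserves the number of double points, which gives $|D^r| = |{D'}^r|$. This contradicts the hypothesis $|D^r| \neq |{D'}^r|$, so $D^r$ and ${D'}^r$ must correspond to different rondles.

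The only point that needs care is the observation that a bigon-free representative is already reduced, which is exactly what lets us apply Fact~\ref{fact:reduced} to $D^r$ and ${D'}^r$ themselves. This holds because Notation~\ref{not:reduced} defines the reduction as the result of successive $(-2)$ moves, each of which requires a bigon. With that in place, the argument is routine.
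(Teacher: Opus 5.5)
Your proof is correct and is essentially the paper's reasoning: the paper gives no separate argument, treating the corollary as immediate from the uniqueness in Fact~\ref{fact:reduced} together with the fact that a diffeomorphism preserves the number of double points. Your extra step $(D^r)^r=D^r$ is valid but not needed, since $D$ and $D^r$ represent the same rondle and the Fact can therefore be applied to $D$ and $D'$ directly.
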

\begin{example}\label{eg:CiCj}
In Figure~\ref{fig:example}, for any $i$, since no bigon appears, 
$C_i \neq C_j$ ($i \neq j$) as rondles by Corollary~\ref{cor:reduced}.   This proves the special case ($C^{(0)}_i \neq C^{(0)}_j$ for $i \neq j$) of Condition~(\ref{Condi:Index}).  
\end{example}  
The proof of \cite[Theorem~2.2]{Khovanov1996} implies the extension of Fact~\ref{fact:reduced} to configurations/twins (Definition~\ref{not:reducedTwin} and Theorem~\ref{thm:reducedTwin}).    
\begin{definition}\label{not:reducedTwin}
A representative of a configuration/twin is called \emph{bigon-free} if any bigon it contains is adjacent to the exterior region.  
For any configuration/twin $p$, a bigon-free representative is obtained from $p$ by successively applying $(-2)$ moves and it is called the \emph{bigon-free twin representative}.   For any twin, the symbol $D^r$ denotes a bigon-free twin representative.     
\end{definition}
By Theorem~\ref{thm:reducedTwin}, Definition~\ref{not:reducedTwin} is well-defined. 
\begin{theorem}\label{thm:reducedTwin}
If $D$ and $D'$ are two representatives of a configuration/twin, then $D^r$ and ${D'}^r$ are identified up to diffeomorphisms of $\mathbb{R}^2$ fixing the ends of arcs.  In other words, $D^r$ is uniquely determined from a given configuration/twin.  
\end{theorem}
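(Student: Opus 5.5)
We will adapt Khovanov's diamond-lemma argument for rondles (the proof of \cite[Theorem~2.2]{Khovanov1996} behind Fact~\ref{fact:reduced}) to configurations with fixed endpoints. Consider the set of all representatives of a fixed twin, up to diffeomorphisms of $\mathbb{R}^2$ fixing the ends of arcs. Put a directed graph on it whose edges are $(-2)$ moves applied to bigons \emph{not} adjacent to the exterior region. Each such move lowers the number of double points by two, so the rewriting system terminates. Its terminal objects are exactly the bigon-free representatives of Definition~\ref{not:reducedTwin}. Uniqueness of $D^r$ then follows from two facts: local confluence of $(-2)$ moves, and the statement that any two representatives related by a $(+2)$ move have a common descendant under $(-2)$ moves. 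Once these hold, Newman's lemma, combined with induction along a chain of self-tangency modifications connecting $D$ and $D'$, gives $D^r=D'^r$.

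The steps, in order, are as follows.

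\textbf{Step 1: reduction to monotone representatives.} By Proposition~\ref{prop:GeomRealization}, any two representatives of a twin are connected by a sequence of moves of three kinds: homotopies through monotone configurations without triple points, which are diffeomorphisms fixing endpoints; $(+2)$ moves; and $(-2)$ moves. All of these correspond to relations (\ref{eq:in}) and (\ref{eq:slide}).

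\textbf{Step 2: local confluence.} Suppose $D$ contains two bigons $\beta_1,\beta_2$, both eligible for a $(-2)$ move. If they share no double point, the moves commute. If they share a vertex or an edge, we check directly, as in Khovanov's case analysis, that the two results coincide up to diffeomorphism or admit a further common reduction. A typical instance is the configuration $\sigma_i\sigma_i\sigma_i$, where removing either of the two overlapping bigons leaves the same single crossing.

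\textbf{Step 3: $(+2)$ moves.} Suppose $D'$ is obtained from $D$ by creating a bigon $\beta$. Then in $D'$ the bigon $\beta$ can be removed to return to $D$, so $D'$ reduces to $D$. Together with Step~2 and Newman's lemma, this shows that $D$ and $D'$ have the same normal form.

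Throughout, the fixed endpoints and the monotonicity condition~(\ref{CondiMono}) play the role that the base point plays for long curves. The only new phenomenon is the boundary: a bigon adjacent to the exterior region (the unbounded complementary regions) need not be removable compatibly with the normal form. For this reason such bigons are excluded from the rewriting system.

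The main obstacle is checking that this exclusion does not break confluence. Concretely, a bigon that is interior in $D$ must not become exterior-adjacent after another reduction, and conversely a bigon must not become removable only after an exterior-adjacent bigon has been removed. I would first handle this through a careful case analysis of overlapping bigons near the boundary lines $y=0,1$ and the outermost arcs. If that fails, I would instead embed the configuration into a long curve or rondle by closing it with a fixed rigid frame whose added arcs create no new bigons. Fact~\ref{fact:reduced} would then apply directly, and the frame would afterwards be stripped off.
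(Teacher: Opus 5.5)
There is a genuine gap, and it sits exactly where you depart from the standard argument: you exclude bigons that border an unbounded complementary region.

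Step~3 says that if $D'$ arises from $D$ by a $(+2)$ move creating $\beta$, then $D'$ reduces to $D$ by removing $\beta$. But when $\beta$ borders an unbounded region, your rewriting system forbids precisely that move, and nothing else links the normal forms of $D$ and $D'$. No finer case analysis can repair this, because with the exclusion the normal form is simply not unique. For $k=2$, the straight configuration and the diagram of $\sigma_1\sigma_1$ represent the same twin. The only bigon of $\sigma_1\sigma_1$ borders both unbounded regions, so both diagrams are terminal in your system, as is every $\sigma_1^{2m}$. So the obstacle you single out (interior bigons becoming exterior-adjacent, and so on) is not where the argument first breaks.

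The motivation for the exclusion is also mistaken. In a monotone configuration with fixed ends, a bigon is bounded by two arc segments joining consecutive crossings of the same two arcs. Up to isotopy it is therefore $\sigma_i\sigma_i$, and relation~(\ref{eq:in}) removes it wherever it sits. The fixed ends only prevent closing up the half-regions touching $y=0$ or $y=1$, and those are not bigons of the configuration.

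Your fallback is not a proof as it stands either. Rondle equivalence is coarser than twin equivalence: for the standard closure, $\overline{xy}$ and $\overline{yx}$ are the same rondle. Also, the reduction of the closed curve may use bigons involving the added frame. So uniqueness from Fact~\ref{fact:reduced} does not pull back to the twin without a further argument that you have not supplied.

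If you let every bigon be removable, your argument works:
\begin{itemize}
\item Termination is clear.
\item Step~2 is correct: two distinct bigons can share only a double point, never an edge. By monotonicity they then sit as $\sigma_i^3$, where removing either one yields the same diagram.
\item Step~3 becomes immediate.
\item Newman's lemma, with induction along the chain, then shows that every representative reduces to the unique representative with the fewest double points.
\end{itemize}
The paper's proof likewise exempts no bigon, and the corrected argument is essentially the paper's proof in different packaging. The paper's Lemma~\ref{lem:minimalDiag} examines a $(+2)$ move followed by a $(-2)$ move. This is a peak $D_i \leftarrow D_{i+1} \rightarrow D_{i+2}$ of two bigon removals from $D_{i+1}$. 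The paper either commutes the two moves (disjoint bigons) or cancels them (a shared double point), which is exactly your local-confluence analysis. Because every such peak resolves in at most one step, the paper can push all $(-2)$ moves in front of all $(+2)$ moves by direct induction on the length of the sequence. It then reads off uniqueness of the minimal representative $D(p)$ without invoking Newman's lemma.
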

The proof of Theorem~\ref{thm:reducedTwin} will be given in Section~\ref{sec:appendix}.

By Fact~\ref{fact:reduced}, we will check when a bigon appears in the process of repeatedly taking  commutators.   
Let $\{\lambda_n\}$ and $\{(\lambda^{-1})_n\}$ be sequences of commutators defined by 
\[
\lambda_1 = b, \lambda_{n+1} = [a, \lambda_{n}].\]
It is elementary to see the following: 
\begin{itemize}
\item $\lambda_1 = b$.  
\item $\lambda_2$ starts from the generator $a$ and ends with $b^{-1}$.  
\item $\lambda_n = [a, \lambda_{n-1}]$ ($n \ge 3$) starts from $a$ and ends with $a^{-1}$.  
      
\item $\lambda^{-1}_n$ ($n \ge 3$) starts from  $a$ and ends with $a^{-1}$.  
\item $a^2$ and $(a^{-1})^2$ produce no new bigons.   
\end{itemize}
Therefore, it suffices to determine where new bigons can appear in $\lambda_2$ and $\lambda_3$ by taking commutators.   
By Figure~\ref{fig:GeneFifthTri}, 
the only place where new bigons can appear is in $\lambda_3$,
namely in the subword $a^{-1}b$.
However, in $C_i$ of Figure~\ref{fig:example}, the (unique) bigon-free twin representative  obtained from $a^{-1} b$ does not 
change the boundary strands 
as in Figure~\ref{fig:aInvb}, and hence does not affect the terminal generators $a^{\pm 1}$ appearing  in $[a, \lambda_n]$ for all $n$.  This proves Condition~(\ref{Condi:Different}).  
Hence we have the sequence of  distinct twins $\lambda_1$, $\lambda_2$, $\lambda_3$, \dots.  
Therefore, we have: 
\begin{example}\label{eg:CnCn}
For non-trivial twin $p=\lambda_n$ ($n=1, 2, 3, \dots$) in Figure~\ref{fig:example} and for certain twins $w$ and  $w'$ starting with $a$ or $b$,    
since taking the closure of $p \sigma_1 \sigma_2 \sigma_3$ $=$ $w b^{\pm 1} \sigma_1 \sigma_2 \sigma_3$ or $w' a^{-1}  \sigma_1 \sigma_2 \sigma_3$ does not produce more new bigons,  
$C^{(n)}_i \neq C^{(n)}_j$ ($i \neq j$) as rondles by Corollary~\ref{cor:reduced}.  
\end{example} 
Thus Condition~(\ref{Condi:Index}) holds, which completes the proof 
for the case $k=4$.  
\begin{figure}[htbp] 
   \centering
   \includegraphics[width=12cm]{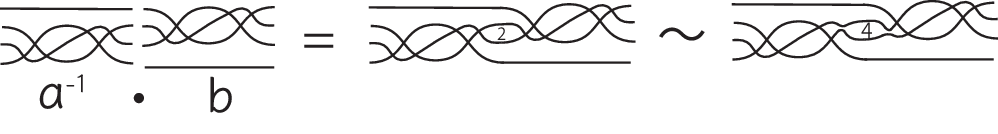} 
   \caption{Reduction of the twin $a^{-1} b$ with a bigon to its bigon-free twin representative, where the label $\ell$ ($=2$ or $4$) indicates an $\ell$-gon.  }
   \label{fig:aInvb}
\end{figure}


Secondly, we treat the general case $k>4$ by applying the argument for the case $k=4$ to the case $k >4$.  
In the following, let $C_{i, k}=C^{(0)}_{i, k}$, and let $C^{(n)}_{i, k}$ ($n=0, 1, 2, \dots$) denote  $C^{(n)}_i = T(p)$ such that $p \in PTW_k$ $(k \ge 4)$.   We will show Conditions (\ref{Condi:PureTwin}), (\ref{Condi:Different}), and (\ref{Condi:Index}).  
By adding straight lines to the generators $a^{\pm 1}, b^{\pm 1}$, 
define twins $a_k$ and $b_k$ as in Figure~\ref{fig:GeneK7} (left), and let $\langle a_k, b_k \rangle \subset PTW_k$ be the subgroup generated by $a_k^{\pm 1}$ and $b_k^{\pm 1}$.  
By definition, this is an example of a small pure twin group (Definition~\ref{def:smallpuretwin}) since each element $a_k$, $a_k^{-1}$, $b_k$, or $b_k^{-1}$ reduces to $1$ by applying (\ref{eq:YB}) exactly once, with relations (\ref{eq:in}) and (\ref{eq:slide}) freely used. 
Hence the construction in Figure~\ref{fig:GeneK7} (right) gives $C_{i, k}$ and $C^{(n)}_{i, k}$ which differ by $p \in \LCS_n (\langle a_k, b_k \rangle)$.     
Thus Condition~(\ref{Condi:PureTwin}) holds.    
In particular, using generators $a_k^{\pm 1}, b_k^{\pm 1} \in \overline{PTW_k}$ as in Figure~\ref{fig:GeneK7} (left), we obtain a pure twin $p \in \LCS_n (\overline{PTW_k})$ running  over $b_k, [a_k, b_k], [a_k, [a_k, b_k]], [a_k,[a_k, [a_k, b_k]]] \cdots$, which gives the corresponding rondles $C_{i, k}, C^{(1)}_{i, k}, C^{(2)}_{i, k}, C^{(3)}_{i, k}, \dots$ in the statement of Theorem~\ref{thm:mainInf} (we will later prove that they satisfy Condition~(\ref{Condi:Different})).   
Note that by Theorem~\ref{thm:mainLCS}, 
if $C_{i, k}$ ($i=1, 2, 3, \dots$) is a twin closure and $C^{(n)}_{i, k}$ includes $p_k \in \LCS_n (\overline{PTW_k})$ as in Figure~\ref{fig:GeneK7}, 
$f(C_{i, k})=f(C^{(n)}_{i, k})$ for any invariant $f$ of order less than $n$ with respect to triple-point modifications.       
\begin{figure}[htbp] 
   \centering
   \includegraphics[width=14.5cm]{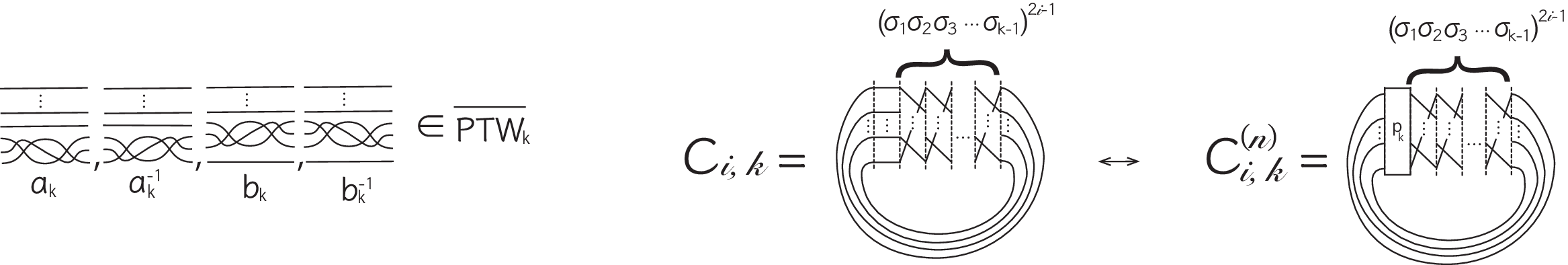} 
   \caption{Generators (left) and $C_{i, k}$ and $C^{(n)}_{i, k}$ (right)}
  \label{fig:GeneK7}
\end{figure}

Next, we show that the sequence  $C_{i, k}$, $C^{(1)}_{i, k}$, $C^{(2)}_{i, k}, C^{(3)}_{i, k}, \dots$ satisfies  Condition~(\ref{Condi:Different}), and that any pair $C_{i, k}, C_{j, k}$ satisfies  Condition~(\ref{Condi:Index}).     
\begin{example}\label{eg:CiCjk}
In Figure~\ref{fig:GeneK7} (right), for any $i$, since no bigon appears, 
$C_{i, k} \neq C_{j, k}$ ($i \neq j$) as rondles, by Corollary~\ref{cor:reduced}.   This proves the special case ($C^{(0)}_{i, k} \neq C^{(0)}_{j, k}$ for $i \neq j$) of Condition~(\ref{Condi:Index}).
\end{example}

By Fact~\ref{fact:reduced}, we will check when a bigon appears in the process of repeatedly taking  commutators.   
Let $\{\lambda_{n, k}\}$ and $\{(\lambda^{-1})_{n, k}\}$ be sequences of commutators defined by 
\[
\lambda_{1, k} = b_k, \lambda_{n+1, k} = [a_k, \lambda_{n, k}].\]
It is elementary to see the following: 
\begin{itemize}
\item $\lambda_{1, k} =b_k$.  
\item $\lambda_{2, k}$ starts from the generator $a_k$ and ends with $b_k^{-1}$.  
\item $\lambda_{n, k} = [a_k, \lambda_{n-1, k}]$ ($n \ge 3$) starts from $a_k$ and ends with $a_k^{-1}$.         
\item $\lambda^{-1}_{n, k}$ ($n \ge 3$) starts from $a_k$ and ends with $a_k^{-1}$.
\item $a_k^2$ and $(a_k^{-1})^2$ produce no new bigons.   
\end{itemize}
Therefore, it suffices to determine where new bigons can appear in $\lambda_{2, k}$ and $\lambda_{3, k}$ by taking commutators.   By Figure~\ref{fig:GeneK7}, only possibility producing new bigons is on $\lambda_{3, k}$, in particular, $a_k^{-1} b_k$.  However, in $C_{i, k}$ of Figures~\ref{fig:example} and \ref{fig:GeneK7}, the (unique) bigon-free twin representative  obtained from $a_k^{-1} b_k$ does not change  the boundary strands by the same argument as  Figure~\ref{fig:aInvb}, and hence does not affect the terminal  generators $a_k^{\pm 1}$ appearing in $[a_k, \lambda_{n, k}]$ for all $n$.  This proves Condition~(\ref{Condi:Different}).  
Hence we have the sequence of  distinct twins $\lambda_{1, k}$, $\lambda_{2, k}$, $\lambda_{3, k}$, \dots.  
We thus obtain: 
\begin{example}\label{eg:CnCnk}
For non-trivial twin $p_k =\lambda_{n, k}$ ($n=1, 2, 3, \dots$) in Figure~\ref{fig:GeneK7} and for certain twins $w$ and  $w'$ starting with $a_k$ or $b_k$,    
since taking the closure of $p_k  \sigma_1 \sigma_2 \sigma_3 \cdots \sigma_{k-1}$ $=$ $w b^{\pm 1}_k \sigma_1 \sigma_2 \sigma_3 \cdots \sigma_{k-1}$ or $w' a^{-1}_k  \sigma_1 \sigma_2 \sigma_3 \cdots \sigma_{k-1}$ does not produce more new bigons,  
$C^{(n)}_{i, k} \neq C^{(n)}_{j, k}$ ($i \neq j$) as rondles by Corollary~\ref{cor:reduced}.  
\end{example} 
Thus Condition~(\ref{Condi:Index}) holds, which completes the proof for the case $k>4$. 
$\hfill\Box$

\begin{remark}
If the reader prefers a diagrammatic proof as in Section~\ref{sec:DiagrammaticProof} for Theorem~\ref{thm:mainLCS}, an example of taking $A_1, A_2, A_3, \dots$ is as in Figure~\ref{fig:GeneFifthTri} for the case $k=4$.  The generalization to $k \ge 4$ is straightforward.  The details are left to the reader.   
\begin{figure}[htbp] 
   \centering
   \includegraphics[width=12cm]{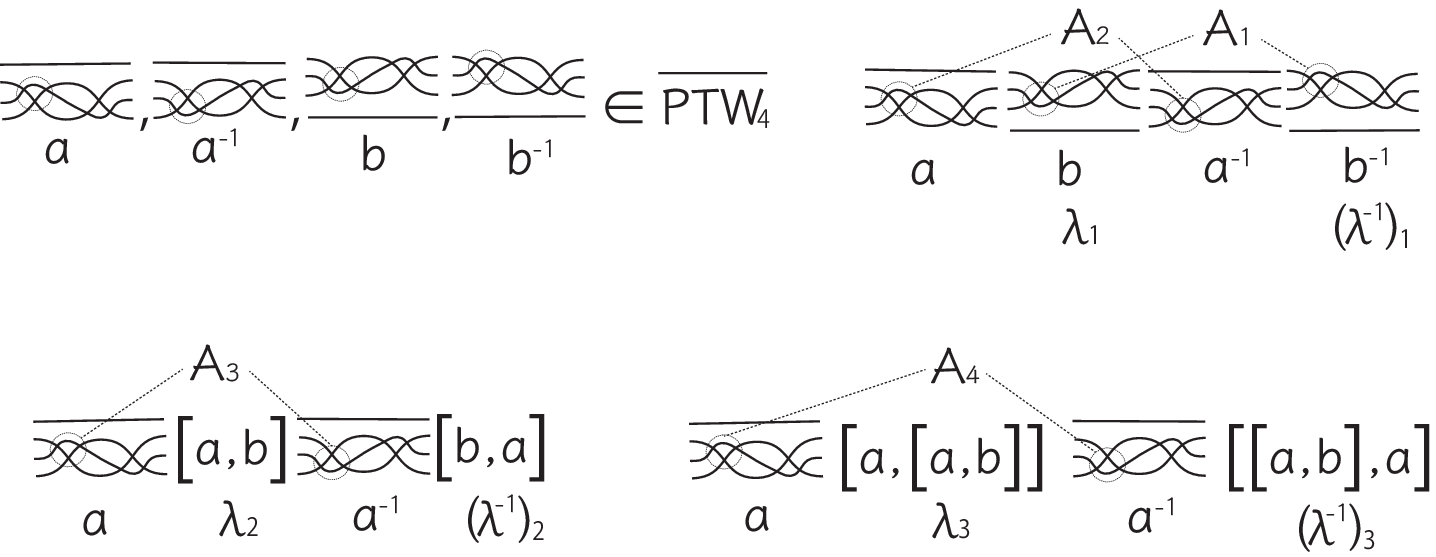} 
   \caption{An example of construction of $T_n$-trivial representatives of a twin having $A_1, A_2, A_3, \dots,$ or $A_n$.}
   \label{fig:GeneFifthTri}
\end{figure}  
\end{remark}

\section{The minimal representative of a twin (Proof of Theorem~\ref{thm:reducedTwin})}\label{sec:appendix}
In this section, we give a proof of Theorem~\ref{thm:reducedTwin}, 
following the proof of \cite[Theorem~2.2]{Khovanov1997} adapted to our setting
\footnote{
Comments for readers of \cite{Khovanov1997}. 
We use the symbol $D$ instead of $\Delta$ throughout this paper.
}.  

If $D', D''$ are representatives of the same configuration/twin, there is a sequence of representatives $D' = D_1, D_2, \dots, D_n = D''$ such that any two consecutive representatives are connected by one of $(\pm 2)$ moves.  
\begin{lemma}[{cf.~\cite[Lemma~2.1]{Khovanov1997}}]\label{lem:minimalDiag}
Let $p$ be a twin. Then for any two representatives 
$D', D''$ of $p$, there is a sequence of representatives
\[
D'=D_1, D_2, \dots, D_n=D''
\]
connected by $(\pm2)$ moves such that no $(+2)$ move
precedes a $(-2)$ move.
\end{lemma}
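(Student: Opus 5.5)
We plan to follow the standard Newman-type argument of Khovanov's Lemma~2.1: start from an arbitrary sequence $D'=D_1,\dots,D_n=D''$ of $(\pm2)$ moves, which exists by the discussion preceding the lemma, and push every $(-2)$ move to the left of every $(+2)$ move. The quantity to decrease is the multiset of lengths or, more concretely, the number of \emph{bad pairs}: pairs consisting of a $(+2)$ move that precedes a $(-2)$ move. Equivalently, we will look at a local maximum of the number of double points along the sequence. If the sequence is not already of the required form, there is an index $j$ with $D_{j-1}\to D_j$ a $(+2)$ move and $D_j\to D_{j+1}$ a $(-2)$ move. We will replace the segment $D_{j-1},D_j,D_{j+1}$ by a segment in which the $(-2)$ moves come first, and argue by induction on a suitable complexity that the process terminates.

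The local step is a case analysis on how the bigon $B_+$ created by the $(+2)$ move and the bigon $B_-$ destroyed by the $(-2)$ move sit in $D_j$.

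First, if $B_+=B_-$ as the same pair of arcs, the two moves cancel. We then delete $D_j,D_{j+1}$, since $D_{j-1}=D_{j+1}$ up to compactly supported diffeomorphism fixing the ends.

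Second, if $B_+$ and $B_-$ share no double points, the moves commute. We perform the $(-2)$ move on $D_{j-1}$ first and the $(+2)$ move afterwards.

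Third, the remaining case is that $B_+$ and $B_-$ share exactly one vertex, or share an arc, in the restricted monotone setting of configurations. Here, as in Khovanov's picture, the $(-2)$ move across $B_-$ composed with the $(+2)$ move creating $B_+$ equals a single self-tangency sliding an arc. Consequently $D_{j+1}$ is obtained from $D_{j-1}$ either directly, by a diffeomorphism, or by a $(-2)$ move followed by a $(+2)$ move, or by no move at all.

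Because the arcs are monotone (condition (\ref{CondiMono})) and triple points are excluded (condition (\ref{CondiTriple})), the possible overlaps of two bigons are very limited. Bigons are formed by two adjacent arcs $i,i+1$, that is, by $\sigma_i\sigma_i$, so the overlaps can be enumerated directly.

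For termination we will use induction on the pair $(\max_j |D_j|,\ \#\{j : |D_j| \text{ maximal}\})$, ordered lexicographically. Each local replacement removes a peak at the maximal height, or lowers it, without creating new peaks of equal or greater height. This is the usual diamond-lemma bookkeeping.

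The main obstacle is the overlapping case. We must check, in the twin/configuration setting with fixed endpoints, that when $B_+$ and $B_-$ share a vertex the result really has fewer double points at the peak. We must also check that no triple point is forced, since our moves must stay within configurations satisfying (\ref{CondiTriple}). We would first try the explicit model in which both bigons lie in a strip of three consecutive arcs. In that model $D_{j-1}$ and $D_{j+1}$ differ by an isotopy that moves a crossing along an arc, and we verify the claim in the words $\sigma_i\sigma_i$, $\sigma_{i\pm1}$ by hand.
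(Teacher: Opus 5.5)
Your proposal is correct and is essentially the paper's (Khovanov's) argument. An adjacent $(+2)$, $(-2)$ pair is swapped when the destroyed bigon shares no double point with the created one. Otherwise both moves are deleted: either the two bigons coincide, or they share exactly one vertex, in which case both are bounded by the same two arcs crossing at three consecutive points, and $D_{j-1}$, $D_{j+1}$ differ only by an isotopy sliding that crossing (not by a self-tangency, and no three-arc model is needed).

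One bookkeeping caution: your lexicographic measure built from $\max_j |D_j|$ can fail to decrease. This happens when the maximum is attained at an endpoint $D_1$ or $D_n$ while a lower interior peak remains. Rely instead on your count of bad pairs, which drops by exactly one under a swap and by at least one under a deletion. The paper's bare induction on length needs this same secondary count, since a swap does not shorten the sequence.
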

\noindent{\it Proof of Lemma~\ref{lem:minimalDiag}.}  Start with any sequence of representatives $D'=D_1, D_2, \dots, D_n =D''$, connecting $D'$ and $D''$.  Suppose that the move $m_i : D_i \to D_{i+1}$ is a $(+2)$ move and $m_{i+1} : D_{i+1} \to D_{i+2}$ is a $(-2)$ move.   If $m_{i+1}$ does not destroy at least one double point created by $m_i$, 
then we exchange the order of $m_i$ and $m_{i+1}$.
The other cases satisfy the condition that $m_i$ is a $(+2)$ move and $m_{i+1}$ is a $(-2)$ move.  These two moves $m_i$ and $m_{i+1}$ cancel.  
More precisely, we have exactly two cases as in Figure~\ref{fig:MinimalDiag} if $m_{i+1}$ destroys at least one double point $c$ created by $m_i$.  
Repeating this reduction and arguing by induction on the length $n$ of the sequence,  Lemma~\ref{lem:minimalDiag} is established.  $\hfill\Box$
\begin{figure}[htbp] 
   \centering
   \includegraphics[width=12cm]{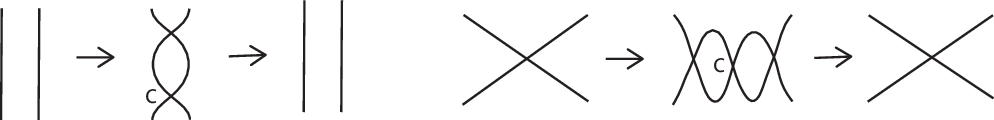} 
   \caption{Two cases satisfying that $m_{i+1}$ destroys at least one double point $c$ created by $m_i$.}
   \label{fig:MinimalDiag}
\end{figure}

\noindent{\it Proof of Theorem~\ref{thm:reducedTwin}.}
Lemma~\ref{lem:minimalDiag} implies that every twin $p$ admits a representative with the minimal number of double points, 
denoted by $D(p)$, and that such a representative is unique up to diffeomorphisms fixing the ends of arcs; that is, any other representative is obtained from $D(p)$ using only $(+2)$ moves.
$\hfill\Box$

\section*{Acknowledgements}
This work was supported by JSPS KAKENHI (Grant Numbers JP20K03604, JP22K03603, and JP25K06999).  
The author would like to thank Professor Mayuko Kon for discussions that contributed to the development of this work.

\bibliographystyle{plain}
\bibliography{CommSt}

\end{document}